\numberwithin{equation}{section}
\newcommand*\patchAmsMathEnvironmentForLineno[1]{%
  \expandafter\let\csname old#1\expandafter\endcsname\csname #1\endcsname
  \expandafter\let\csname oldend#1\expandafter\endcsname\csname end#1\endcsname
  \renewenvironment{#1}%
     {\linenomath\csname old#1\endcsname}%
     {\csname oldend#1\endcsname\endlinenomath}}%
\newcommand*\patchBothAmsMathEnvironmentsForLineno[1]{%
  \patchAmsMathEnvironmentForLineno{#1}%
  \patchAmsMathEnvironmentForLineno{#1*}}%
\newtheorem{thm}{Theorem}[section]
\newtheorem{dfn}[thm]{Definition}
\newtheorem{lma}[thm]{Lemma}
\newtheorem{cor}[thm]{Corollary}
\newtheorem{prp}[thm]{Proposition}
\newtheorem{clm}[thm]{Claim}
\newtheorem{fact}[thm]{Fact}
\def\bad{\text{bad}}
\def\eps{\varepsilon}
\title{Perfect matchings in $3$-partite $3$-uniform hypergraphs}
\author{Allan Lo}
\address{School of Mathematics, University of Birmingham, Birmingham, B15~2TT, UK}
\email{s.a.lo@bham.ac.uk}
\thanks{A.~Lo was supported by the  European Research Council
under the European Union's Seventh Framework Programme (FP/2007--2013) / ERC Grant
Agreement n. 258345.}
\author{Klas Markstr{\"o}m}
\address{Department of Mathematics and Mathematical Statistics, Ume\r{a} University, S-901 87 Ume\r{a}, Sweden}
\email{klas.markstrom@math.umu.se}
\date{\today}
\keywords{hypergraph, $k$-partite, perfect matching, minimum degree}
\begin{document}

\begin{abstract}
Let $H$ be a $3$-partite $3$-uniform hypergraph, i.e. a $3$-uniform hypergraph such that every edge intersects every partition class in exactly one vertex, with each partition class of size $n$.
We determine a Dirac-type vertex degree threshold for perfect matchings in $3$-partite $3$-uniform hypergraphs.
\end{abstract}

\maketitle


\section{Introduction} \label{sec:introduction}
A perfect matching in a graph $G$ is a set of vertex-disjoint edges, which covers all vertices of~$G$.
Tutte~\cite{MR0023048} gave a characterization of all graphs that contain a perfect matching.
An easy consequence of a celebrated theorem of Dirac~\cite{MR0047308} is that if $G$ is a graph of even order $n$ and minimum degree $\delta(G) \ge n/2$, then $G$ contains a perfect matching.
Thus, it is natural to ask for Dirac-type degree thresholds for perfect matchings in hypergraphs.

We follow the notation of~\cite{MR1633290} and denote by $\binom{U}k$ the set of all $k$-element subsets of a set $U$.
We will often write a $k$-set to mean a $k$-element set.
A \emph{$k$-uniform hypergraph}, or \emph{$k$-graph} for short, is a pair $H = (V(H),E(H))$, where $V(H)$ is a finite set of vertices and the edge set $E(H)$ is a set of $k$-subsets of $V(H)$.
Often we write $V$ instead of $V(H)$ when it is clear from the context.
A matching $M$ in $H$ is a set of vertex-disjoint edges of~$H$, and it is \emph{perfect} if $M$ covers all vertices of~$H$.
Clearly, a perfect matching only exists if $|V|$ is divisible by~$k$.

Given a $k$-graph $H$ and an $l$-set $T \in \binom{V}l$, let $\deg(T)$ be the number of $(k-l)$-sets $S \in \binom{V}{k-l}$ such that $S \cup T$ is an edge in~$H$. 
Let $\delta_l(H)$ be the \emph{minimum $l$-degree of $H$}, that is, $\min \deg(T)$ over all $T \in \binom{V}l$.
We define $m_l(k,n)$ to be the smallest integer $m$ such that every $k$-graph $H$ of order $n$ satisfying $\delta_l(H) \ge m$ contains a perfect matching.
Hence, we always assume that $k|n$ whenever we talk about $m_l(k,n)$.
Thus we have $m_1(2,n) = n/2$, by the result of Dirac.

For $k \ge 3$ and $l=k-1$, R{\"o}dl, Ruci{\'n}ski and Szemer{\'e}di~\cite{MR2500161} determined the value of $m_{k-1}(k,n)$ exactly, which improved the bound given in~\cite{MR2207573}.
For $k \ge 3$ and $1 \le l < k$, it is conjectured in~\cite{MR2496914} that 
\begin{align}
m_l(k,n) \sim \max \left\{ \frac12, 1- \left( 1- \frac{1}k\right)^{k-l} \right\} \binom{n}{k-l}. \label{eqn:m_l(k,n)}
\end{align}
For $k=3$ and $l = 1$, H{\`a}n, Person and Schacht~\cite{MR2496914} showed that \eqref{eqn:m_l(k,n)} is true, that is, $m_{1}(3,n) \sim \frac59 \binom{n}2$ improving on a result of Daykin and H{\"a}ggkvist~\cite{MR615135} for $k=3$.
The exact value was independently determined by Khan~\cite{2011arXiv1101.5830K} and K{\"u}hn, Osthus and Treglown~\cite{kuhn2010matchings}.
Khan~\cite{2011arXiv1101.5675K} further determined $m_1(4,n)$ exactly.
For $k \ge 3$ and $k/2 \le l < k$, Pikhurko~\cite{MR2438870} proved that $m_l(k,n) \sim \frac12 \binom{n}{k-l}$.
Recently, exact values of $m_l(k,n)$ for all $k/2 \le l < k$ were determined by Czygrinow and Kamat~\cite{MR2928635} and by Treglown and Zhao~\cite{MR2925939, TreglownZhao2}.
Alon, Frankl, Huang, R{\"{o}}dl, Ruci\'{n}ski and Sudakov~\cite{alon2011large} determined the asymptotic value of $m_l(k,n)$ when $k-l \le 4$.
Thus, for $1 \le l < k/2$, \eqref{eqn:m_l(k,n)} is still open except for a few cases.
Partial results were proved by H{\`a}n, Person and Schacht~\cite{MR2496914} and later improved by the second author and Ruci{\'n}ski~\cite{markstromperfect}.
We recommend~\cite{rodldirac} for a survey of other results on perfect matchings in hypergraphs.

Instead of seeking a perfect matching, Bollob\'{a}s, Daykin and Erd\H{o}s~\cite{MR0412030} considered Dirac-type degree thresholds for a matching of size~$m$.
\begin{thm}[Bollob\'{a}s, Daykin and Erd\H{o}s~\cite{MR0412030}] \label{thm:BollobasDaykinErdos}
Let $k$ and $m$ be integers with $k \ge 2$.
If $H$ is a $k$-graph of order~$n \ge 2k^3(m+2)$ and
$$\delta_1(H)  > \binom{n-1}{k-1} - \binom{n-m}{k-1},$$ then $H$ contains a matching of size $m$.
\end{thm}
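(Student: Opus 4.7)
The plan is induction on $d$, with base case $d=1$ immediate since $\delta_1(H)>0$ gives an edge. For the inductive step, $\binom{n-d}{k-1}\le\binom{n-d+1}{k-1}$ implies that the hypothesis for $d$ is at least as strong as that for $d-1$, so by induction $H$ contains a matching $M$ of size $d-1$. Assume for contradiction $M$ is maximum. Set $W=V(M)$ and $U=V\setminus W$, so $|W|=k(d-1)$ and $|U|=n-k(d-1)$; maximality forces every edge of $H$ to meet $W$.

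The heart of the argument is the following augmenting observation: if for some $e_i\in M$ there exist distinct $w,w'\in e_i$ and edges $\{w\}\cup Y,\{w'\}\cup Y'\in E(H)$ with $Y,Y'\in\binom{U}{k-1}$ and $Y\cap Y'=\emptyset$, then $(M\setminus\{e_i\})\cup\{\{w\}\cup Y,\{w'\}\cup Y'\}$ is a matching of size $d$, contradicting maximality. Writing $L_w=\{Y\in\binom{U}{k-1}:\{w\}\cup Y\in E(H)\}$ for $w\in W$, the non-existence of such a configuration means that for every $e_i\in M$ the families $(L_w)_{w\in e_i}$ are pairwise cross-intersecting on $U$. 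Since the hypothesis $n\ge 2k^2(d+2)$ ensures $|U|\ge 2(k-1)$, a Hilton-type bound on cross-intersecting $(k-1)$-uniform families limits $\sum_{w\in e_i}|L_w|$, and hence the total number of edges of $H$ with $|e\cap W|=1$. A crude count gives at most $\binom{|W|}{2}\binom{n-2}{k-2}$ edges with $|e\cap W|\ge 2$, so together these yield an upper bound on $|E(H)|$.

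The main obstacle is making this upper bound strictly smaller than the lower bound $|E(H)|\ge n\delta_1(H)/k>(n/k)\bigl[\binom{n-1}{k-1}-\binom{n-d}{k-1}\bigr]$. The two sides are comparable in order of magnitude, so the cross-intersecting input must be applied sharply enough to give $\sum_{w\in e_i}|L_w|$ of order $k\binom{|U|-1}{k-2}$ rather than the trivial $\binom{|U|}{k-1}$ one gets by a single EKR-type application. Once this sharpening is in hand, a direct binomial calculation shows that the hypothesis $n\ge 2k^2(d+2)$ provides exactly the multiplicative slack needed for the leading-order terms to tip in our favour, thereby producing the required contradiction. The bulk of the combinatorial content therefore sits in the Hilton-type lemma for pairwise cross-intersecting families on $|U|$ vertices, with the large-$n$ hypothesis absorbing the remaining routine book-keeping.
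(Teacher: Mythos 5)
First, note that the paper does not prove this statement at all: Theorem~\ref{thm:BollobasDaykinErdos} is quoted from Bollob\'as--Daykin--Erd\H{o}s \cite{MR0412030}, and the only related argument in the paper is the proof of the partite analogue (Lemma~\ref{lma:partialmatching} and Theorem~\ref{thm:partialmatching}) in Section~\ref{sec:partialmatching}. Measured against either that argument or the original one, your proposal has a genuine gap, and it sits exactly where you place ``the bulk of the combinatorial content.'' The sharpening you ask for --- that pairwise cross-intersection of the families $(L_w)_{w\in e_i}$ forces $\sum_{w\in e_i}|L_w|$ to be of order $k\binom{|U|-1}{k-2}$ --- is false. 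Empty families are cross-intersecting with everything, so the configuration $L_{w_1}=\binom{U}{k-1}$ and $L_{w}= \emptyset$ for the other $w\in e_i$ satisfies all your constraints and has $\sum_{w\in e_i}|L_w|=\binom{|U|}{k-1}$; no Hilton-type lemma can rule it out. Worse, this is precisely what happens in the extremal graph (all $k$-sets meeting a fixed $(d-1)$-set, the non-partite version of $H_k(n;m)$ from Example~\ref{exm:H_k(n;m)}), where each edge of a maximum matching contains one vertex of the special set with full link into $U$ and $k-1$ vertices with empty link into $U$.

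Even granting only the correct per-edge bound $\approx\binom{|U|}{k-1}$, the global comparison you propose cannot close: your structural upper bound is about $(d-1)\binom{|U|}{k-1}+\binom{k(d-1)}{2}\binom{n-2}{k-2}$, while the lower bound $|E(H)|\ge n\delta_1(H)/k$ is only about $\frac{n}{k}(d-1)\binom{n-2}{k-2}\approx\frac{k-1}{k}(d-1)\binom{n-1}{k-1}$. For $n$ large compared with $kd$ the upper bound exceeds the lower bound by a factor tending to $\frac{k}{k-1}$, so no contradiction arises --- and this is not an artefact of crude estimates: the extremal graph really has about $\frac{k}{k-1}\cdot\frac{n\delta_1}{k}$ edges, because the $d-1$ special vertices have degree near $\binom{n-1}{k-1}$, far above $\delta_1$, which is exactly where the degree-sum bound leaks. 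So large $n$ works against you, not for you. The known proofs avoid global edge counting and use the degree hypothesis locally at the \emph{uncovered} vertices: every $x\in V\setminus V(M)$ has all but $O(d^2n^{k-3})$-many of its $\ge\delta_1(H)$ edges meeting $V(M)$ in exactly one vertex, hence is ``connected'' to many pairs (matching edge, vertex); your augmentation observation (which is correct, and is the same one used in Lemma~\ref{lma:partialmatching}) then forbids two uncovered vertices from being connected to the same matching edge through different vertices, and the contradiction comes from counting connections per uncovered vertex, not from bounding $|E(H)|$. Recasting your argument along those lines is what is needed.
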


For $k=3$, K{\"u}hn, Osthus and Treglown~\cite{kuhn2010matchings} extended the above result and proved that if $\delta_1(H)  > \binom{n-1}{2} - \binom{n-m}{2}$ and $m \le n/3$ (provided that $n$ is sufficiently large), then $H$ contains a matching of size $m$.
Moreover, this result and Theorem~\ref{thm:BollobasDaykinErdos} are best possible.

\subsection{Matchings in $k$-partite $k$-graphs}

For $k \in \mathbb{N}$, we refer to the set $\{1,2, \dots k\}$ as $[k]$.
A $k$-graph $H$ is \emph{$k$-partite}, if there exists a partition of the vertex set $V$ into $k$ classes $V_1, \dots, V_k$ such that every edge intersects each class in exactly one vertex.
We say that $H$ is \emph{balanced} if $|V_1| = |V_2| = \dots = |V_k|$.
Clearly, a perfect matching can only exist if $H$ is balanced.
Given a $k$-partite $k$-graph $H$ and an integer $l \in [k]$, an $l$-set $T \in \binom{V}l$ is said to be \emph{legal} if $| T \cap V_i| \le 1$ for $i \in [k]$.
Let $\delta_l(H)= \min \deg(T)$ over all legal $l$-sets in~$H$.
We define $m'_l(k,n)$ to be the smallest integer $m$ such that every $k$-partite $k$-graph $H$ with $n$ vertices in each class satisfying $\delta_l(H) \ge m$ contains a perfect matching.
Note that we no longer assume that $k|n$ for $m'_l(k,n)$.
Aharoni, Georgakopoulos and Spr{\"u}ssel~\cite{MR2460215} proved that $m'_{k-1}(k,n) \le n/2+1$.
Pikhurko~\cite{MR2438870} proved an Ore-type result for perfect matchings in $k$-partite $k$-graphs.
Given an $l$-set $L \in \binom{[k]}l$, an $l$-set $T \in \binom{V}{l}$ is an \emph{$L$-tuple} if $|T \cap V_i| =1$ for all $i \in L$ and so $|T \cap V_j| =0$ for $j \notin L$.
Let $\delta_L(H) = \min \deg(T)$ over all $L$-tuples~$T$.

\begin{thm}[Pikhurko~\cite{MR2438870}]
Let $1 \le l < k$ and $L \in \binom{[k]}l$.
Let $H$ be a $k$-partite $k$-graph with partition classes $V_1, \dots, V_k$ each of size $n$.
If 
\begin{align}
	\frac{\delta_{L}(H)}{n^{k-l}} + \frac{\delta_{[k]\setminus L}(H)}{n^l} \ge 1+o(1) \nonumber
\end{align}
then $H$ contains a perfect matching.
\end{thm}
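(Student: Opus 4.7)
The plan is to apply the Rödl--Ruciński--Szemerédi absorbing method. Fix $L$ and write $L^c = [k]\setminus L$. The three-step strategy is: (i) construct a small \emph{absorbing matching} $M_0$ with the property that for every balanced leftover set $U$ with $|U \cap V_i| \le \gamma n$, the hypergraph $H[V(M_0) \cup U]$ has a perfect matching; (ii) find a near-perfect matching $M_1$ in $H' = H - V(M_0)$; (iii) let the absorbing property of $M_0$ swallow the leftover of $M_1$.

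For Step (i), given a balanced $k$-tuple $S = \{v_1,\dots,v_k\}$ (one vertex per class), an \emph{absorber for $S$} is a short matching $M_S$ disjoint from $S$ such that $H[V(M_S)\cup S]$ has a perfect matching. A standard random-selection argument (pick each edge of $H$ independently with probability $p = n^{-(k-1/2)}$, then delete conflicts and absorbers for non-existent $S$'s) produces $M_0$ of size $O(n^{1-\alpha})$ absorbing every possible balanced $U$, provided we can show that every balanced $S$ has $\Omega(n^{c|M_S|})$ absorbers. The key technical step is therefore the count of absorbers. The simplest candidates are size-$1$ absorbers: edges $e = (u_1,\dots,u_k)$ such that $(u_L, v_{L^c})$ and $(v_L, u_{L^c})$ are both edges of $H$. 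Counting these gives $\Omega(\delta_L(H)\delta_{L^c}(H))$ only when we can intersect two neighbourhoods productively, which fails when one of $\delta_L(H)$, $\delta_{L^c}(H)$ is much smaller than the corresponding trivial bound $n^{k-l}$ or $n^l$. We therefore use size-$2$ absorbers: disjoint edge pairs $(e_1,e_2) = ((u_1,\dots,u_k),(w_1,\dots,w_k))$ for which the $3k$ vertices $\{u_i,v_i,w_i\}_{i\in[k]}$ can be partitioned into three edges of $H$. The many possible rearrangements (one per permutation of $\{u_i,v_i,w_i\}$ at each coordinate) give enough flexibility to invoke the Ore inequality at each of the three required edges, and a case analysis based on which of $\delta_L(H)/n^{k-l}$ or $\delta_{L^c}(H)/n^l$ carries the bulk of the sum produces the desired $\Omega(n^{2k})$ count.

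For Step (ii), since $|V(M_0)\cap V_i|=o(n)$, the Ore-type hypothesis passes to $H'$ with only $o(1)$ loss on each normalised degree. A near-perfect matching in $H'$ can then be obtained by a fractional-to-integer argument: construct a fractional perfect matching in $H'$ directly from the Ore-type condition (assign weight to edges proportional to their endpoints' degree contributions so that every vertex gets total weight close to $1$), and round it using the weak hypergraph regularity lemma to an integer matching leaving $o(n)$ vertices in each class. The leftover is automatically balanced (both $M_0$ and $M_1$ are partite, hence use equally many vertices from each class), so Step (iii) is immediate from the absorbing property.

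The main obstacle is Step (i), specifically the absorber count under the Ore-type hypothesis. Unlike a Dirac-type condition, the Ore-type condition does not force both $\delta_L(H)$ and $\delta_{L^c}(H)$ to be large simultaneously, which rules out the cleanest size-$1$ absorber argument and forces either a size-$2$ absorber analysis or a convexity argument that sums contributions from several absorber types. Getting the right absorber structure and verifying that the Ore inequality really does yield the required $\Omega(n^{2k})$ bound in every regime is the technical heart of the proof; once this is established, Steps (ii)--(iii) follow routine lines.
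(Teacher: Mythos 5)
First, a point of reference: the paper does not prove this statement at all. It is quoted as Pikhurko's theorem and used as a black box (only the partial-matching Theorem~1.4, the absorption lemma, and the 3-partite extremal analysis are proved internally), so there is no internal argument to compare yours against; your proposal has to stand on its own, and as written it does not, because the step you yourself call the technical heart is exactly where it breaks.

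The concrete gap is the absorber count in the lopsided regime that the Ore-type hypothesis explicitly allows. Since $\delta_L(H)\le n^{k-l}$ trivially, the hypothesis only forces $\delta_{[k]\setminus L}(H)\ge \psi(n)\,n^{l}$ for some $\psi(n)\to 0$ (in Pikhurko's theorem $\psi$ is of order $\sqrt{\log n/n}$); the two normalised degrees are not both bounded below by a constant. Take $S$ whose $([k]\setminus L)$-part is a tuple of degree only $\psi(n)n^{l}$. Every rearranged perfect matching on $V(M_S)\cup S$ must contain an edge through that tuple or through a sub-tuple of it, and one checks (even when $H$ is otherwise nearly complete, which is what $\delta_L\sim n^{k-l}$ forces) that the number of your size-$2$ absorbers for such $S$ is only $\Theta\bigl(\psi(n)n^{2k}\bigr)$, not $\Omega(n^{2k})$ ``in every regime'' as you claim. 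This is not a cosmetic loss: a random selection with $p\le cn^{1-2k}$ (needed so that $|M_0|=O(n)$) then gives absorbing capacity only $O(\psi(n)n)$ per $k$-set, so Step (iii) requires the almost-perfect matching of Step (ii) to leave $o(\psi(n)n)$ vertices --- for Pikhurko's error term that is $o(\sqrt{n\log n})$ --- whereas your fractional-matching-plus-weak-regularity rounding is only claimed (and can only be expected) to leave $o(n)$ vertices. So the two halves of your argument do not meet: the absorber-counting lemma is asserted rather than proved and is false as stated, the required quantitative calibration between $\psi(n)$, $|M_0|$, and the leftover of $M_1$ is absent, and Step (ii) is itself only a sketch. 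Until the counting lemma is proved with the correct $\psi$-dependence and the near-perfect matching step is strengthened (or the absorbers are redesigned, e.g.\ made larger or anchored at the low-degree tuples), this is a plausible framework but not a proof; it is also, for what it is worth, not the route of Pikhurko's published argument, which the present paper simply cites.
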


This implies that for $ k/2 \le l <k$, $m'_l(k,n) \sim \frac12 n^{k-l}$.
In this paper, we determine $m'_1(3,n)$, that is the minimum $\delta_1(H)$ that ensures a perfect matching in a $3$-partite $3$-graph~$H$.
First we bound $m'_1(3,n)$ from below by considering the following examples.

\begin{dfn}[${H}_k(n;d_1, \dots, d_k), \mathcal{H}_k(n;m;d),H_k(n;m)$] \label{dfn:H_k(n;m)}
Let $V_1,\dots, V_k$ be disjoint vertex sets with $|V_i| = n$ for all $i \in [k]$.
For all $i \in [k]$, let $U_i$ and $W_i$ be a partition of $V_i$ with $|W_i| = d_i$.
Define $H_k(n;d_1, \dots, d_k)$ to be the $k$-partite $k$-graph with partition classes $ V_1, \dots, V_k$ consisting of all those edges which intersect $W = \bigcup_{i \in [k]} W_i$.
Define $\mathcal{H}_k(n;m;d)$ to be the family of $k$-partite $k$-graphs $H(n;d_1, \dots, d_k)$ with $\max d_i = d$ and $\sum_{i \in [k]} d_i = m$.
Define $H_k(n;m)$ to be $H_k(n;d_1, \dots, d_k)$ with $d_i =\lfloor (m +i-1)/k \rfloor$ for all $i \in [k]$.
\end{dfn}

Notice that every matching in $H_3(n;m)$ has size at most $\min \{m, n\}$, since every edge intersects $\bigcup_{i \in [k]} W_i$ and $\sum_{i \in [k]} |W_i| = m$.
Also,
\begin{align} \nonumber
	 \delta_1(H_3(n;m) ) = n^2 - \left( n-\lfloor m/3 \rfloor \right)(n-\lfloor (m+1)/3 \rfloor).
\end{align}
This suggests that $m'_1(k,n) > \delta_1(H_k(n;n-1))$.
We also consider the next example.
A family $\mathcal{A}$ of sets is \emph{intersecting} if $A \cap B \ne \emptyset$ for all $A,B \in \mathcal{A}$.

\begin{dfn}[$\mathcal{H}^*_k(n;m)$] \label{dfn:H^*_k(n;m)}
Define $\mathcal{H}^*_k(n;m)$ to be the family of $k$-partite $k$-graphs such that
\begin{align}
\nonumber
	\mathcal{H}^*_k(n;m) = \{H_k(n;m-1) \cup H' : \text{$E(H')$ is an intersecting family } \},
\end{align}
where $H'$ is also a $k$-partite $k$-graph on $V(H_k(n;m-1))$.
\end{dfn}

Note that every matching in $H^* \in \mathcal{H}^*_3(n;m)$ also has size at most $\min \{m, n\}$.
Given $H_3(n;m-1)$, we fix $u_i \in U_i$ for each $i \in [3]$ and let $H'$ be the $3$-partite $3$-graph such that $E(H')$ is the set of all legal $3$-sets $T$ such that $|T \cap \{u_1, u_2, u_3\}| \ge 2$.
Note that $E(H')$ is an intersecting family.
Hence, the $3$-graph $H^*_3(n;m) = H_3(n;m-1) \cup H'$ is a member of $\mathcal{H}^*_3(n;m)$ and $\delta_1(H^*_3(n;m)) = \delta_1(H_3(n;m-1))+1$.

Given integers $n \ge m \ge 1$, let $d_3(n,m) = \max \{\delta_1(H_3(n;m) ), \delta_1(H_3(n;m-1) )+1\}$.
Explicitly 
\begin{align*}
d_3(n,m) =  \begin{cases}
			n^2 - \left( n-\lfloor m/3 \rfloor \right)(n-\lfloor (m+1)/3 \rfloor) & \text{if $m \ne 1 \pmod{3}$,}\\
			n^2 - \left( n- (m-1)/3 \right)^2+1	& \textrm{if $m = 1 \pmod{3}$.}
		\end{cases}
\end{align*} 
Hence, $m'_1(3,n) > d_3(n,n-1)$ by considering $H_3(n;n-1)$ and $H^*_3(n;n-1)$.
We show that this bound is sharp for sufficiently large $n$, that is, $m'_1(3,n) = d_3(n,n-1)+1$.

\begin{thm} \label{thm:exact}
There is an integer $n_0$ such that for all $n \ge n_0$, $m'_1(3,n) = d_3(n,n-1)+1$.
\end{thm}

In addition, we also prove a natural generalization of Theorem~\ref{thm:BollobasDaykinErdos} for $k$-partite $k$-graphs~$H$, that is, a Dirac-type $\delta_1(H)$ threshold for a matching of size $m$ in $k$-partite $k$-graphs~$H$.
Let $\delta_l(\mathcal{H}_k(n;m;d)) = \min \{ \delta_l(H) : H \in \mathcal{H}_k(n;m;d)\}$.
We show that if $H$ has no matching of size $m+1$ and $\delta_1(H) \ge \delta_1(\mathcal{H}_k(n;m;\lceil m/k \rceil))$ for sufficiently large $n$, then $H$ is a subgraph of $H' \in \mathcal{H}_k(n;m;\lceil m/k \rceil) \cup \mathcal{H}^*_k(n;m)$.

\begin{thm} \label{thm:partialmatching}
Let $k$, $m$ and $n$ be integers such that $k \ge 2$ and $n \ge k^7 m$.
Let $H$ be a $k$-partite $k$-graph with $n$ vertices in each class.
Suppose the largest matching in $H$ is of size~$m$ and $$\delta_1(H) \ge \delta_1( \mathcal{H}_k(n;m;\lceil m/k \rceil)) \approx (m-\lceil m /k \rceil -o(1) )n^{k-2}.$$
Then $H$ is a subgraph of $H' \in \mathcal{H}_k(n;m;\lceil m/k \rceil) \cup \mathcal{H}^*_k(n; m )$.
Moreover, if $m \ne 1 \pmod{k}$, then $H$ is a subgraph of  $H' \in \mathcal{H}_k(n;m;\lceil m/k \rceil)$.
\end{thm}

In particular, for $k=3$, we deduce the following corollary.

\begin{cor} \label{cor:partialmatching}
Let $H$ be a $3$-partite $3$-graph with each class of size $n \ge 3^7m $ and $\delta_1(H) > d_3(n,m)$.
Then $H$ contains a matching of size $m+1$.
\end{cor}

Again, the bound given in the corollary above is optimal, as can be seen by considering $H_3(n;m)$ and $H^*_3(n;m)$.
Hence, we ask whether every $3$-partite $3$-graph $H$ with each class of size $n > m $ and $\delta_1(H) > d_3(n,m)$ contains a matching of size~$m+1$.

The layout of the paper is as follows.
In the next section, we set up some basic notation and establish a few facts about $\mathcal{H}_k(n;m;d)$ and $\mathcal{H}^*_k(n;m)$.
In Section~\ref{sec:partialmatching}, we prove Theorem~\ref{thm:partialmatching} and Corollary~\ref{cor:partialmatching}.
The remainder of the paper is dedicated to the proof of Theorem~\ref{thm:exact}.

Here, we give an outline of the proof of Theorem~\ref{thm:exact}, which uses the absorption technique of R\"{o}dl, Ruci\'{n}ski and Szemer\'{e}di~\cite{MR2500161}.
The corresponding $k$-partite version of the absorption lemma for $k$-graphs, Lemma~\ref{lma:absorptionlemma}, is proven in Section~\ref{sec:absorption}.
Let $H$ be a $3$-partite $3$-graph with $\delta_1(H) > d_3(n,n-1)$.
The absorption lemma implies that there exists a small matching $M$ such that for every `small balanced' set $W \subseteq V(H) \setminus V(M)$ there exists a perfect matching in $H[V(M) \cup W]$.
Thus, to prove Theorem~\ref{thm:exact}, it is sufficient to show that there exists a matching covering almost all vertices in $H' = H \setminus V(M)$ as the remaining vertices can then be `absorbed' by $M$ to get a perfect matching in $H$.
New ideas are needed to overcome the constraints imposed by $H'$ being $3$-partite.
Rather than getting bogged down in the details and calculations, we first prove that Theorem~\ref{thm:exact} holds asymptotically, Theorem~\ref{thm:asymptotic}, in Section~\ref{sec:asymptotic} to setup the framework and illustrate these ideas.
We then refine these arguments in Section~\ref{sec:epsilon-close} to show that either $H'$ contains a large matching (in which case we are done by absorption) or $H$ `looks like' the extremal graph $H'_3(n;n)$ (defined in Section~\ref{sec:3partite}).
Finally,  in Section~\ref{sec:extremalresult}, we show that in the latter case $H$ contains a perfect matching by an application of Corollary~\ref{cor:partialmatching}.

\section{Notation and Extremal graphs} \label{sec:notation}

For $a,b \in \mathbb{N}$, we refer to the set $\{a,a+1, \dots ,b\}$ as $[a,b]$.
Let $H$ be a $k$-partite $k$-graph with vertex classes $V_1, \dots, V_k$.
For integers $m$ with $k |m$, we say that a vertex set $W \subseteq V$ is a \emph{balanced $m$-set} if $|W \cap V_i| =m/k$ for all $i \in [k]$.
Throughout this paper, unless stated otherwise, we define $v_{i,j}$ to be the vertex in $ V_i \cap V(e_j)$ for a partition class $V_i$ and an edge $e_j$.

Given a $k$-graph $H$ and a vertex set $U \subseteq V(H)$, we denote by $H[U]$ the subgraph of $H$ induced by~$U$.
Given $k$-graphs $H$ and $H'$, we write $H - H'$ to denote the subgraph of $H$ obtained by removing all the edges in $E(H) \cap E(H')$.

\subsection{Properties of $\mathcal{H}_k(n;m;d)$ and $ \mathcal{H}^*_k(n;m)$}
By the definition of $H(n;d_1,\dots, d_k)$, we have
\begin{align}
	\delta_l(H_k(n;d_1,\dots, d_k)) = n^{k-l} - \max_{I \in \binom{[k]}{k-l}} \left\{ \prod_{i \in I} \left(n- d_i \right) \right\}\nonumber
\end{align}
for $1 \le l <k$.
Moreover, 
\begin{align}
	\delta_l(H_k(n;m)) = n^{k-l} - \prod_{i \in [k-l]} \left(n- \left\lfloor \frac{m+i-1}k \right\rfloor \right) \nonumber
\end{align}
and by concavity
\begin{align}
	\delta_l(H_k(n;m))	= \max\{ \delta_l(H) : H \in \mathcal{H}_k(n;m;\lceil m/k \rceil)  \} .\label{eqn:maxd3(n,m)}
\end{align}
Let $l = 1$ and let $0 \le d_1 \le  \dots \le d_k = d$ be integers.
Set $m = \sum d_i$.
Further suppose that $n \ge  (k-1)d$.
Notice that 
\begin{align}
& \delta_1 \left( H_k(n;d_1, \dots, d_k) \right)  \nonumber \\
& = n^{k-1} - \prod_{i \in [k-1]} \left(n- d_i \right) 
 = \sum_{1 \le j \le k-1}  (-1)^{j+1} n^{k-1-j} \sum_{I \in \binom{[k-1]}{j}}   \prod_{i \in I} d_i \nonumber
\\
& \ge n^{k-2} \sum_{i \in [k-1]} d_i  - \sum_{ 2 \le j \le k-1}   n^{k-1-j} \sum_{I \in \binom{[k-1]}{j}}   \prod_{i \in I} d_i \nonumber\\
& \ge (m- d )n^{k-2} - \sum_{ 2 \le j \le k-1} \binom{k-1}j n^{k-1-j} d^{j}\nonumber\\
& \ge (m- d )n^{k-2} - (k-1)^2 d^2 n^{k-3}\sum_{ 2 \le j \le k-1} \frac{1}{j!} \left(\frac{(k-1)d}{n} \right)^{j-2}\nonumber\\
& \ge (m- d )n^{k-2} - (k-1)^2 d^2 n^{k-3}, \label{eqn:d(H_k(n,m,d)}
\end{align}
where the last inequality holds since $n \ge (k-1)d$ and $\sum_{ j \ge 2 } (j!)^{-1} < \sum_{ j \ge 1 } 2^{-j} \le 1 $.

The proposition below gives a recursive relationship for $\delta_1(\mathcal{H}_k(n;m;d ))$, which will be used in the proof of Theorem~\ref{thm:partialmatching}.

\begin{prp} \label{prp:d(H_k)}
Let $d$, $k$, $m$ and $n$ be integers with $n \ge m > (k-1)d$.
Then \begin{align*}
	 \delta_1 ( \mathcal{H}_k(n-1;m-k;d-1 )) & =  \delta_1(\mathcal{H}_k(n;m;d ))-n^{k-1} + (n-1)^{k-1}.
\end{align*}
\end{prp}

\begin{proof}
Let $d_1, \dots, d_k$ be positive integers such that $\max d_i = d$ and $\sum d_i = m$.
Since $m > (k-1)d$, we have $d_i \ge 1$ for each $i \in[k]$.
Thus, there is a natural one-to-one relationship between $\mathcal{H}_k(n;m;d)$ and $\mathcal{H}_k(n-1;m-k;d-1)$, namely, $H_k(n;d_1, \dots, d_k) \in \mathcal{H}_k(n;m;d)$ maps to $H_k(n-1;d_1-1, \dots, d_k-1) \in \mathcal{H}_k(n-1;m-k;d-1)$.
Without loss of generality, we may assume that $d_k = d$.
Hence
\begin{align*}
 \delta_1 (H_k(n;d_1,\dots, d_k)) & =   n^{k-1} -  \prod_{i \in [k-1]} \left(n - d_i \right) 
 =  n^{k-1} -  \prod_{i \in [k-1]} \left(n -1 - (d_i-1) \right) \\
 & =  \delta_1(H_k(n-1;d_1-1,\dots, d_k-1)) + n^{k-1} - (n-1)^{k-1}.
\end{align*}
Thus, the proposition follows after taking the minimum over all such $d_i$'s.
\end{proof}

\section{Partial matchings} \label{sec:partialmatching}

Here, we prove the analogue of Theorem~\ref{thm:BollobasDaykinErdos} for $k$-partite $k$-graphs, Theorem~\ref{thm:partialmatching}.
For $k=2$, the theorem can easily be verified by K\"{o}nig's theorem and the minimum degree condition.
Also it is trivial for $m=1$, so we may assume that $k \ge 3$ and $m \ge 2$ in this section.
The proof divides into two main parts.
If $n \ge 4k^3m^2$, then we analyze the structure of $H$ directly (Lemma~\ref{lma:partialmatching}).
If $k^7 m \le n \le 4k^3 m^2$, then we proceed by induction on $n$, where its proof is based on~\cite{MR0412030}.

We need the following definitions, which are only used in this section.
Let $V_1, \dots, V_k$ be partition classes each of size $n$.
Let $M = \{ e_1, \dots, e_m\}$ be a matching of size $m$.
Recall that $\{v_{i,j}\} = V_i \cap V(e_j)$ for $i \in [k]$ and  $e_j \in M$.
Given $i \in [k]$, $x \in V \setminus V(M)$ and $e_j \in M$, we say that $x$ and $e_j$ are \emph{$i$-connected with respect to $M$} if there exist more than $2k^2 n^{k-3}$ edges containing both $x$ and $v_{i,j}$ but no other vertices in~$M$.
In addition, we say that $x$ makes \emph{$l$ connections to a submatching $M' \subseteq M$ with respect to $M$} if there exist $l$ distinct pairs $(e_j,i_j)$ such that $x$ and $e_j$ are $i_j$-connected (with respect to $M$) for $e_j \in M'$ and $i_j \in [k]$.
The following lemma studies the properties of $i$-connectness.

\begin{lma} \label{lma:connect}
Let $k$, $l$, $m$ and $n$ be integers such that $k \ge 3$ and $l \le 2k, m \le n$.
Let $H$ be a $k$-partite $k$-graph with $n$ vertices in each class.
Suppose $M = \{ e_1, \dots, e_m\}$ is a matching of size $m$ in $H$.
\begin{enumerate}
	\item[(i)] Let $U \subseteq V$ with $|U| < 2k$ and $U \cap V(M) = \emptyset$.
	Further assume that there exist distinct vertices $x_1, \dots, x_l \in V \setminus (U \cup V(M))$, where $x_j$ is $i_j$-connected to $e_j$ for $j \in [l]$. 
	Then there exists a matching $\{f_1, \dots, f_l\}$ with $V(f_j) \cap (V(M) \cup U \cup \{x_1, \dots, x_l\}) = \{v_{i_j,j}, x_j\}$ for $j \in [l]$.
	In particular, $M \cup \bigcup_{j \in [l]}f_j - \bigcup_{j \in [l]}e_j$ is a matching of size~$m$ disjoint from $U$.
	\item[(ii)] If $e_j \in M$ is $i$- and $i'$-connected to $x$ and $x'$ respectively for distinct $x,x' \in V \setminus V(M)$ and $i \ne i'$, then there is a matching of size $m+1$.
\end{enumerate}
\end{lma}

\begin{proof}
First we prove (i).
For each $j \in[l]$, we are going to choose an edge $f_j$ containing $x_j$ and $v_{i_j,j}$ but no other vertices in $U_j$, where $U_j = V(M) \cup U \cup \bigcup_{j' \in [j-1]} V(f_{j'})\cup \{x_{j}, \dots, x_l\}$.
Suppose we have already chosen $f_1, \dots, f_{j-1}$ and we choose $f_j$ as follows.
Without loss of generality, $x_j \in V_1$ and $i_j =2$.
Set $U'_j = U_j \setminus (V(M) \cup V_1 \cup V_2)$.
Note that 
\begin{align*}
|U'_j| & \le |U| + \sum_{j' \in [j-1]} |V(f_{j'}) \setminus (V_1 \cup V_2)| +  |\{x_{j+1}, \dots, x_l\}|\\
& \le  2k + (j-1)(k-2) + l-j \le 2k^2.
\end{align*}
The number of edges containing $x_j$, $v_{i_j,j}$ and a vertex in $U'_j$ is at most $|U'_j| n^{k-3} \le 2k^2 n^{k-3}$.
Since $x_j$ and $e_j$ are $2$-connected and $U'_j \cap V(M)= \emptyset$, there is an edge $f_j$ containing $x_j$ and $v_{2,j}$ but no other vertices in $U_j' \cup V(M) = U_j$.
Thus, there exist vertex-disjoint edges $f_1, \dots, f_l$ as desired.
Also, $M \cup \bigcup_{j\in[l]}f_j -\bigcup_{j\in[l]} e_j $ is a matching of size $m$ as desired.

We now deduce (ii) from (i).
The number of edges containing $x$, $x'$ and $v_{i,j}$ is at most $n^{k-3}$.
Since $x$ is $i$-connected to $e_j$, there exists an edge~$f$ containing both $x$ and $v_{i,j}$ but no other vertices in $V(M) \cup \{x'\}$.
Set $U = V(f) \setminus \{ v_{i,j} \}$.
By~(i) taking $x_1 = x'$, there exists an edge $f'$ containing $x'$ with $V(f') \cap V(M \cup f) = \{v_{i',j}\}$.
Thus, $M \cup \{f,f'\}-e_j$ is a matching of size $m+1$.
\end{proof}

By~\eqref{eqn:d(H_k(n,m,d)}, the following lemma implies Theorem~\ref{thm:partialmatching} for $n \ge  4k^3m^2$.
We give a rough sketch of its proof when $m \ne 1 \pmod{k}$.
Let $M$ be a matching of maximal size in $H$.
Since $\delta_1(H)$ is large, each $x \in X$ makes many connections to~$M$, where $X = V \setminus V(M)$.
For each $i \in [k]$, let $M_i$ be the set of edges in $M$ that are $i$-connected to at least one $x \in X$.
We then deduce that the set of $M_1, \dots, M_k$ forms a partition of~$M$.
By setting $W_i = V(M_i) \cap V_i$ for each $i \in [k]$, we show that $H$ is a subgraph of $H_k(n;d_1, \dots, d_k)$ with $ d_i = |W_i| $.

\begin{lma} \label{lma:partialmatching}
Let $k$, $m$ and $n$ be integers such that $k \ge 3$, $m \ge 2$ and $n \ge 4k^3m^2$.
Let $H$ be a $k$-partite $k$-graph with $n$ vertices in each class.
Suppose the largest matching in $H$ is of size $m$ and 
\begin{align*}
\delta_1(H) \ge (m-\lceil m/k \rceil)n^{k-2} - m^2 n^{k-3}.
\end{align*}
Then $H$ is a subgraph of $H' \in \mathcal{H}_k(n;m;\lceil m/k \rceil) \cup \mathcal{H}^*_k(n; m )$.
Moreover, if $m \ne 1 \pmod{k}$, then $H$ is a subgraph of  $H' \in \mathcal{H}_k(n;m;\lceil m/k \rceil)$.
\end{lma}

\begin{proof}
Let $V_1, \dots, V_k$ be the partition classes of $H$.
Let $M = \{e_1, \dots, e_m \}$ be a matching in~$H$ with $m$ maximal.
Let $r$ and $s$ be the unique integers such that $m = rk+s$, $r \ge 0$ and $1 \le s \le k$.
This implies that
\begin{align}
\delta_1(H) \ge (m-r-1)n^{k-2} - m^2 n^{k-3} \ge (m-r-5/4) n^{k-2} \nonumber
\end{align}
as $n \ge 4k^3m^2$.
Set $X_i = V_i \setminus V(M)$ for each $i \in[k]$ and $X = \bigcup_{i \in [k]} X_i$.
For $x \in X$, every edge containing $x$ must intersect with $V(M)$ by the maximality of $M$.
Hence, the number of edges containing $x$ and exactly one vertex of $M$ is at least
\begin{align}
	 \deg(x)- k^2 m^{2} n^{k-3}
	& \ge \delta_1(H)-  n^{k-2}/4
	 & \ge \left(m-r-3/2 \right)n^{k-2}. \label{eqn:partialL_x(M)}
\end{align}
Therefore, $x$ makes at least $m-r-1$ connections to $M$.
Otherwise, by~\eqref{eqn:partialL_x(M)} we have 
\begin{align}
	(m-r-3/2)n^{k-2} 
	& \le  mk \times 2k^2n^{k-3} + (m-r-2)n^{k-2}.  \nonumber
 \nonumber
\end{align}
Since $n \ge 4k^3 m^2 \ge  8k^3 m $, the right hand side of the inequality above is at most $(m-r-7/4) n^{k-2}$ implying a contradiction.

We say that an edge $e_j \in M$ is \emph{bad}, if there exists a vertex $x \in X$ such that $e_j$ is $i$-connected to $x$ for two distinct values of $i \in [k]$.
We say that a vertex $x \in X$ is \emph{bad} if $x$ has two connections with some $e \in M$.
Hence, each bad edge is connected to a bad vertex.
By Lemma~\ref{lma:connect}(ii) and the maximality of $M$, if $e_j \in M$ is $i$- and $i'$-connected to $x$ and $x'$ respectively for $x,x' \in X$, then $i = i'$ or $x = x'$.
This implies that if $e_j$ is bad, then $e_j$ is connected to precisely one vertex $x \in X$ and moreover this vertex is bad.
Thus, there are at most $m$ bad vertices.
Set $X'_i$ be the set of vertices in $X_i$ that are not bad and $X' = \bigcup_{i \in [k]} X_i'$.
So $|X'_i| \ge n -m - m >0$.

Given $i \in [k]$ and $x \in X'$, denote by $M_i(x)$ the set of edges $e \in M$ such that $x$ and $e$ are $i$-connected.
Let $M_i$ be the set of edges in $M$ that are $i$-connected to some $x \in X'$.
Note that $M_i \cap M_{i'} = \emptyset$ for all $i \ne i'$ by Lemma~\ref{lma:connect}(ii) and the maximality of~$M$.
Also, for $x \in X_i'$,
\begin{align}
\text{$M_i(x) = \emptyset$  and $M_j(x) \subseteq M_j$ for all $j \in [k]$.} \label{Mi(x)}
\end{align}
Recall that $x$ makes at least $m-r-1$ connections to $M$, so 
\begin{align}
	\sum_{j \in [k] \setminus \{i\}} |M_j(x)| = \sum_{j \in [k]} |M_j(x)|  \ge m-r-1. \label{Mi(x)2}
\end{align}
For $i \in [k]$ and $x \in X'_i$, we have
\begin{align}
m = & |M|  \ge   |M_i|+\sum_{j \in [k] \setminus \{i\}} |M_{j}(x)| \ge |M_i| + m-r-1, \label{eqn:|M'_1|}\\
|M_i| & \le r+1 \label{eqn:|M'_1|2}.
\end{align}
We now divide into two cases depending on whether $s \ne 1$ (i.e. $m \ne 1 \pmod{k}$) or not.

\noindent\textbf{Case 1: $s \ne 1$.}
Note that $2 \le s \le k$.
Pick $x_k \in X'_k$.
Since $m = rk +s \ge rk +2$, \eqref{Mi(x)2} implies that $\sum_{j \in [k-1]} |M_{j}(x_k)| \ge r(k-1)+1$.
By \eqref{Mi(x)} and~\eqref{eqn:|M'_1|2}, we have $| M_j (x_k) | \le |M_j| \le r+1$ for all $j \in [k]$. 
Without loss of generality, we may assume that $|M_1(x_k)| = r+1$.
Hence, $M_1(x_k) = M_1$.

Next pick $x_1 \in X'_1$.
By taking $x = x_1$ and $i = 1$, we must have equality in~\eqref{eqn:|M'_1|} implying that $M_i(x_1) = M_i$ for all $i \in [k] \setminus \{1\}$ (as $M_i(x_1) \subseteq M_i$ by~\eqref{Mi(x)}).
Therefore, $M_1$, \dots,  $M_k$ partition $M$ with $M_i(x_1) \cup M_i(x_k) = M_i$.
Recall that $x_1$ and $x_k$ are not bad.
Since every edge $e \in M$ is connected to $x_1$ or $x_k$, $M$ contains no bad edge and so there is no bad vertex in $X$.
Thus $X_i = X'_i$ for all $i \in [k]$.

Recall that $|M| = rk+s \ge rk+2$ and $|M_i| \le r+1$ by~\eqref{eqn:|M'_1|2}.
Note that vertices $x_1$ and $x_k$ are no longer needed in our argument, so we can relabel the $M_i$ if necessary such that $|M_1| = |M_2| = r+1$.
Hence, $\sum_{i \in [k] \setminus \{j\}}|M_i| = m- r-1$ for $j =1,2$.
By~\eqref{Mi(x)} and~\eqref{Mi(x)2}, every vertex $x \in X_1 \cup X_2$ makes exactly $m-r-1$ connections to $M$.
Therefore, for $j = 1,2$ and for all $i \in [k] \setminus \{j\}$, every $x \in X_j$ is $i$-connected to every edge $e \in M_i$.

Set $W_i = V(M_i) \cap V_i$ for $i \in [k]$ and $W = \bigcup_{i \in [k]} W_i$.
Let $d_i = |M_i| = |W_i|$, so by~\eqref{eqn:|M'_1|2} $d_i \le r+1$ for all $i \in [k]$ and $d_1 = d_2 = r+1$.
We are going to show that $H \subseteq H_k(n;d_1, \dots, d_k)$, which then implies the lemma (as $\sum_{i \in [k]} d_i = m$).
It is sufficient to show that all edges in $H$ meet~$W$.
Suppose the contrary and let $e'$ be an edge vertex-disjoint from~$W$.
Clearly $e'$ must intersect with some edge $e \in M$ by the maximality of~$M$.
Let $e_1, \dots, e_l$ be the edges in $M$ with $V(e') \cap V(e_j) \ne \emptyset$ and $e_j \in M_{i_j}$ for $j \in [l]$.
So $l \le k$.
Next, we pick distinct vertices $x_1, \dots, x_l \in X \setminus V(e')$ such that $x_j \in X_1$ if $i_j \ne 1$ otherwise $x_j \in X_2$.
Set $U = V(e') \setminus V(M)$, so $|U| < k$.
There exists a matching $\{f_1, \dots, f_l\}$ satisfying condition~(i) of Lemma~\ref{lma:connect}.
Hence, $M  \cup \{ e', f_1, \dots, f_l \} -\bigcup_{j=1}^{l} e_j $ is a matching of size $m+1$, a contradiction.

\noindent\textbf{Case 2: $s = 1$.}
Define $M_i'$ to be the set of edges in $M$ that are $i$-connected to at least $4k$ vertices in $X$.
Thus $M_i' \subseteq M_i$ for all $i \in [k]$.
Each edge $e \in M'_i$ makes at most $|X \setminus X_i| = (k-1)(n-m)$ connections to $X$.
Since each $x \in X$ makes at least $m-r-1$ connections to $M$, we have $\sum_{i \in [k]} |M'_i| \ge m-1$, or else
\begin{align*}
	(m-r-1) |X| & \le (m-2) \times (k-1)(n-m) + 2 \times 4k,
\end{align*}
a contradiction as $|X| = k(n-m)$.
If $M = \bigcup_{i \in [k]} M'_i$, then set $W_i = V(M'_i) \cap V_i$ for each $i \le [k]$ and so $\max |M_i| = r+1$ by~\eqref{eqn:|M'_1|2} and $m=kr+1$.
By using a similar argument as used in Case~1, we deduce that $H$ is a subgraph of $H_k(n;d_1, \dots,d_k)$, where $d_i = |W_i|$.
Hence, we may assume that $|M \setminus \bigcup M'_i| = 1$.
If $|M'_1| = r+1$ say, then by~\eqref{eqn:|M'_1|} each $x \in X_1'$ is connected to every $e \in M_i$ for all $i \in [k] \setminus \{1\}$.
Since $|X_1'| \ge n-2m \ge 4k$, we have $M = \bigcup M_i'$, a contradiction.
Therefore, we may assume that $|M'_i| = r$ for all $i \in [k]$.
Let $M' = \bigcup M'_i$.
Set $W_i = V(M'_i) \cap V_i$ for $i \in [k]$ and $W = \bigcup_{i \in [k]} W_i$.

Let $H'$ be the subgraph of $H$ induced by the edges not intersecting with~$W$.
Suppose there exist two vertex-disjoint edges $e'_1$ and~$e'_2$ in $H'$.
Notice that $e'_1$ or $e'_2$ must intersect with $M'$ or else $M' \cup \{ e'_1, e'_2\}$ is a matching of size $m+1$, a contradiction. 
Set $U = V(e'_1 \cup e'_2) \setminus V(M')$, so $|U| <  2k$.
Let $e_1, \dots, e_l$ be the edges in $M'$ that intersect with $e'_1$ or $e'_2$ with $e_j \in M'_{i_j}$.
Note that $l \le  2k$.
Since each $e_j$ is $i_j$-connected to $4k$ vertices in $X$, there exist distinct vertices $x_1, \dots, x_l \in X \setminus U$ with $x_j$ is $i_j$-connected to $e_j$.
Therefore, there exists a matching $\{f_1, \dots, f_l\}$ satisfying the condition~(i) of Lemma~\ref{lma:connect} taking $M = M'$.
Observe that $M'  \cup \{e'_1 ,e'_2 ,f_1, \dots, f_l \} -\bigcup_{j=1}^{l} e_j $ is a matching of size $m+1$, a contradiction.
Thus, $E(H')$ is an intersecting family. 
Since $|W_i| = r$ for every $i \in [k]$, $H$ is a subgraph of $H_k(n;m-1) \cup H'$.
Moreover, $H_k(n;m-1) \cup H' \in \mathcal{H}^*_k(n;m)$ and so the proof of the lemma is completed.
\end{proof}

By Lemma~\ref{lma:partialmatching}, to prove Theorem~\ref{thm:partialmatching}, it remains to consider the case $k^7 m \le n \le 4k^3 m^2$ and so $m \ge k^4/4$.
By~\eqref{eqn:d(H_k(n,m,d)} and the fact that $n \ge k^7 m $, we have
\begin{align*}
\delta_1(\mathcal{H}_k(n;m;\lceil m/k \rceil)) & \ge  (m-\lceil m/k \rceil)n^{k-2}- (k-1)^2 \lceil m/k \rceil^2 n^{k-3}\\
	& \ge  \left( (k-1)/k - 1/k^3 \right) m n^{k-2}.
\end{align*}
The next lemma shows that there exists a vertex in each partition class such that its vertex degree is large.
Its proof uses the ideas in the proof of Lemma~\ref{lma:partialmatching}.

\begin{lma} \label{lma:partialmathcinginduction}
Let $k$, $m$ and $n$ be integers such that $k \ge 3$, $m \ge 2$ and $n \ge k^7 m$.
Let $H$ be a $k$-partite $k$-graph with $n$ vertices in each class.
Suppose the largest matching in $H$ is of size $m$ and 
$\delta_1(H) \ge \left( \frac{k-1}k - \frac{1}{k^3}\right) m n^{k-2}$.
Then there exists a vertex $v$ in each partition class with $\deg(v) \ge n^{k-1}/4k$.
\end{lma}

\begin{proof}
Let $V_1, \dots, V_k$ be the partition classes of $H$ and let $M = \{e_1, \dots, e_m\}$ be a largest matching of size $m$ in $H$.
Let $X_i = V_i \setminus V(M)$ for $i \in [k]$ and $X = \bigcup_{i \in [k]} X_i$.
For $x \in X$, the number of edges containing $x$ and exactly one vertex of $M$ is at least
\begin{align}
	 \delta_1(H)-\binom{k-1}2 m^{2} n^{k-3} & \ge \left(\frac{k-1}k  - \frac{1}{k^3}  - \frac{1}{k^5} \right) m n^{k-2}\label{eqn:partialL_x(M)induction}
\end{align}
as $n \ge k^7 m $.
Recall that $x$ and $e_j$ are $i$-connected if there exist more than $2k^2 n^{k-3}$ edges containing both $x$ and $v_{i,j}$ but no other vertices in~$M$.
Since
\begin{align*}
	\left(\frac{k-1}k  - \frac{1}{k^3}  - \frac{1}{k^5} \right) m n^{k-2} \ge 2k^2 n^{k-3} m+
\left(\frac{k-1}k - \frac{2}{k^3}  \right) m n^{k-2},
\end{align*}
every $x \in X$ makes at least $\left(\frac{k-1}k - \frac{2}{k^3}   \right) m$ connections to~$M$.
In addition, if an edge $e_j \in M$ is $i$- and $i'$-connected to vertices $x$ and $x' \in X$ respectively, then $i = i'$ or $x = x'$ by Lemma~\ref{lma:connect}(ii) and the maximality of $M$.
Thus, there are at most $m$ vertices $x \in X$ that have two connections with some $e \in M$.
Remove these vertices from $X_i$ and call the resulting set $X'_i$ and let $X' = \bigcup_{i \in [k]} X'_i$.
Note that $|X'_i| \ge n-2m>0 $ for each $i \in [k]$.

Given $i \in [k]$, define $M_i$ to be the set of edges $e_j \in M$ such that $e_j$ is $i$-connected to at least one vertex $x \in X'$.
If there is an edge $e \in M$ that is not connected to any vertex $x \in X'$, then we arbitrarily assign $e$ to exactly one of~$M_i$.
Thus, $M_1, \dots, M_k$ partition~$M$.
For $x \in X'$, denote by $M_i(x)$ the set of edges $e \in M$ such that $x$ and $e$ are $i$-connected.
Hence, for $x \in X_i$
\begin{align}
\text{$M_i(x) = \emptyset$  and $M_j(x) \subseteq M_j$ for all $j \in [k]$.} \label{Mi(x)3}
\end{align}
Recall that $x$ makes at least $\left(\frac{k-1}{k} - \frac{2}{k^3} \right) m$ connections to $M$ and at most one connection to each edge of $M$, so 
\begin{align}
	\sum_{j \in [k] \setminus \{i\}} |M_j(x)|  = \sum_{j \in [k]} |M_j(x)|\ge \left( \frac{k-1}{k} - \frac{2}{k^3}\right) m. \label{Mi(x)4}
\end{align}
Hence, \eqref{Mi(x)3} and \eqref{Mi(x)4} imply that
\begin{align}
	m & \ge  |M_i| + \sum_{j \in [k] \setminus \{i\} } |M_j(x)| \ge |M_i| + \left( (k-1)/k - 2/k^3  \right) m, \nonumber \\
	|M_i| & \le \left(  1/k + 2/k^3  \right)m .	\label{eqn:M_i}
\end{align}
Set $W_i = V_i \cap M_i$ and $W = \bigcup_{i \in [k]} W_i$.

\begin{clm}
There are at least $m n^{k-1}/(2k^2)$ edges that meet~$W_1$.
\end{clm}
\begin{proof}[Proof of Claim]
Recall that if $x \in X$ is not $i$-connected to $e_j \in M$, then there are at most $2 k^2 n^{k-3}$ edges containing $x$ and $v_{i,j} = V_i \cap e_j$ but no other vertices in $M$.
Pick $x \in X'_2$.
Since $x$ is not connected to any edges in $M_2$, there are at most $k |M_2| \times 2 k^2 n^{k-3} = 2k^3n^{k-3}|M_2|$ edges containing $x$ and exactly one vertex of $V(M_2)$ but no other vertices in~$M$. 
Also, $x$ makes at most one connection to each edge in~$M$.
Together with Lemma~\ref{lma:connect}(ii) and the maximality of~$M$, $x$ is not $i'$-connected to any edges in $M_i$ for all $i \ne i'$.
Hence, there are at most $(k-1) (|M | - |M_2|) \times 2 k^2 n^{k-3}$ edges containing $x$ and exactly one vertex of $V(M)\setminus (V(M_2) \cup W)$ but no other vertices in~$M$.
Note that there is no edge containing $x$ and a vertex in~$W_2$ (as $x \in V_2$ and $W_2 \subseteq V_2$).
In total, the number of edges containing $x$ and exactly one vertex of $(V(M) \setminus W) \cup W_2$ but no other vertices in~$M$ is at most
\begin{align}
	  2k^3 n^{k-3}|M_2| + (k-1)(|M | - |M_2|) \times 2 k^2 n^{k-3} 
	\le 2 k^3 m n^{k-3} 
	\le 2 mn^{k-2}/k^4. \nonumber
\end{align}
The number of edges containing $x$ and at least one vertex of $W \setminus (W_1\cup W_2)$ is at most
\begin{align}
	& (|W| - |W_1| - |W_2|) n^{k-2} 
= \sum_{3 \le i \le k } |W_i| n^{k-2} 
= \sum_{3 \le i \le k } |M_i| n^{k-2} \nonumber \\
\le &  (k-2)\left(   1/k + 2/ k^3 \right)m n^{k-2} \nonumber
\end{align}
by~\eqref{eqn:M_i}.
In addition, there are at most $k^2 m^2 n^{k-3} \le m n^{k-2}/k^5$ edges containing $x$ and at least two vertices of $M$.
By the maximality of $M$, every edge containing $x$ must meet $V(M)$.
Thus, the number of edges containing $x$ and exactly one vertex of $W_1$ is at least
\begin{align}
	& \delta_1(H)
	- {2 mn^{k-2}}/{k^4}
	- (k-2)\left( {1}/k + {2}/{k^3} \right)m n^{k-2}
	- {mn^{k-2}}/{k^5}\nonumber 
	 \ge  {mn^{k-2}}/{k^2}. \nonumber
\end{align}
Since $x \in X'_2$ is chosen arbitrarily and $|X'_2|  \ge n-2m \ge n/2$, there are at least $m n^{k-1}/(2k^2)$ edges that meet $W_1$ as claimed.
\end{proof}
Recall that $|W_1| = |M_1| \le 2 m/k$ by~\eqref{eqn:M_i}, so there exists a vertex $v_1 \in W_1$ such that $\deg(v_1) \ge n^{k-1}/4k$.
By similar arguments, there exists $v_i \in W_i \subseteq V_i$ such that $\deg(v_i) \ge n^{k-1}/4k$ for each $i \in [k]$ as required.
\end{proof}

We are now ready to prove Theorem~\ref{thm:partialmatching}.
We need the following simple proposition of which we omit the proof.

\begin{prp} \label{prp:easymindegree}
Let $H$ be a $k$-partite $k$-graph with $n$ vertices in each class.
Suppose the largest matching in $H$ is of size $m$.
\begin{itemize}
	\item[(a)] Given a vertex $v \in V(H)$, if $H \setminus v$ contains a matching of size $m$ for some vertex $v$, then $\deg(v) \le n^{k-1} - (n-m)^{k-1}$. 

\item[(b)] Given a legal $k$-set $T$, if $H \setminus T$ contains a matching of size $m-k+1$, then there exists a vertex $v \in T$ with $\deg(v) \le n^{k-1} - (n-m)^{k-1}$.
\end{itemize}
\end{prp}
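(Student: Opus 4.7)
For part $(a)$, I will take a matching $M$ of size $m$ in $H \backslash v$ and argue that if $\deg(v)$ exceeds the stated bound, then $v$ has an edge in $H$ disjoint from $V(M)$, which would extend $M$ to a matching of size $m+1$ in $H$ and contradict the maximality of $m$. In the complete $k$-partite hypergraph, the number of $(k-1)$-tuples through $v$ that meet $V(M)$ is exactly $n^{k-1} - (n-m)^{k-1}$, since those avoiding $V(M)$ are drawn one from each of the $k-1$ other classes, each of size $n-m$. Hence $\deg(v) \le n^{k-1} - (n-m)^{k-1}$, as claimed.

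For part $(b)$, write $T = \{v_1, \ldots, v_k\}$ with $v_i \in V_i$ and let $M$ be a matching of size $m-k+1$ in $H \backslash T$. Suppose for contradiction that $\deg(v_i) > n^{k-1} - (n-m)^{k-1}$ for every $i \in [k]$. The plan is to construct greedily pairwise disjoint edges $e_1, \ldots, e_k$ with $v_i \in e_i$, each $e_i$ disjoint from $V(M) \cup (T \backslash \{v_i\}) \cup V(e_1) \cup \cdots \cup V(e_{i-1})$; then $M \cup \{e_1, \ldots, e_k\}$ is a matching of size $m+1$ in $H$, contradicting the maximality of $m$. At step $i$ the forbidden set within each class $V_j$ with $j \ne i$ is $(V(M) \cap V_j) \cup \{v_j\} \cup \{e_l \cap V_j : l < i\}$; the key observation is that when $l = j < i$ we already have $e_l \cap V_j = v_j$, so this term contributes no new vertex. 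A direct count then gives a per-class forbidden size of at most $m - k + i + 1 \le m$ when $i \le k-1$, and exactly $m$ when $i = k$. By the same reasoning as in part $(a)$, the number of potential edges through $v_i$ meeting the forbidden set is at most $n^{k-1} - (n-m)^{k-1}$, which is strictly less than $\deg(v_i)$, so a suitable $e_i$ exists in $H$.

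The second estimate in part $(b)$, namely $n^{k-1} - (n-m)^{k-1} \le km n^{k-2}$, follows from the factorisation $n^{k-1} - (n-m)^{k-1} = m \sum_{j=0}^{k-2} n^{k-2-j} (n-m)^j \le (k-1) m n^{k-2}$. The one place that requires care is the bookkeeping in the greedy step of part $(b)$: one must track the overlap between $T$ and the previously chosen $e_l$ correctly so that the per-class forbidden count never exceeds $m$, which amounts to handling the two sub-cases $j < i$ and $j > i$ separately. Once this is set up, the argument is the same counting as in part $(a)$.
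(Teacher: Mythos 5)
Your proof is correct: the maximality-of-$m$ counting in part $(a)$ and the greedy construction of $e_1,\dots,e_k$ with the per-class forbidden set bounded by $m$ in part $(b)$ both go through, and the factorisation $n^{k-1}-(n-m)^{k-1}=m\sum_{j=0}^{k-2}n^{k-2-j}(n-m)^j$ gives the final estimate. The paper omits the proof of this proposition as ``simple'', and your argument is exactly the intended straightforward one, so there is nothing to compare beyond noting agreement.
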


\begin{proof} [Proof of Theorem~\ref{thm:partialmatching}]
For $k=2$, the theorem can be easily verified by K\"{o}nig's theorem and the minimum degree condition.
Also it is trivial for $m=1$, so we may assume that $k \ge 3$ and $m \ge 2$.
We proceed by induction on $n$ and $m$.
Let $V_1, \dots, V_k$ be the partition classes of $H$.
First suppose that $m \le k^4/4$.
Note that $n \ge k^7 m \ge 4k^3 m^2$ and \eqref{eqn:d(H_k(n,m,d)} implies that 
\begin{align*}
\delta_1( \mathcal{H}_k(n;m;\lceil m/k \rceil)) 
& \ge (m- \lceil m/k \rceil )n^{k-2} - (k-1)^2\lceil m/k \rceil^2 n^{k-3} \\
& \ge (m- \lceil m/k \rceil )n^{k-2}  -m^2 n^{k-3}.
\end{align*}
So Theorem~\ref{thm:partialmatching} is true by Lemma~\ref{lma:partialmatching}.

Therefore, we may assume that $m > k^4/4$.
Let $T = \{ v_1, \dots, v_k\}$ with $v_i \in V_i$ and $\deg(v_i) \ge n^{k-1}/4k$.
Note that $T$ exists by Lemma~\ref{lma:partialmathcinginduction} as $\delta_1(H) \ge \left( (k-1)/k - 1/k^3 \right)m n^{k-2}$ (see the calculation before Lemma~\ref{lma:partialmathcinginduction}).
Let $H' = H \setminus T$, so
\begin{align}
\delta_1(H') 
	& \ge   \delta_1(\mathcal{H}_k(n;m;\lceil m/k \rceil ) ) - (n^{k-1} - (n-1)^{k-1}) \nonumber\\
	& =  \delta_1(\mathcal{H}_k(n-1;m-k;\lceil (m-k)/k \rceil ) ) \nonumber
\end{align}
where the last equality is due to Proposition~\ref{prp:d(H_k)}.
Assume that $m \ne 1 \pmod{k}$.
If there does not exist a matching of size greater than $m-k$ in~$H'$, then the induction hypothesis implies that $H'$ is a subgraph of $H'' \in  \mathcal{H}_k(n-1;m-k;\lceil (m-k)/k \rceil )$.
This means that $H$ is a subgraph of a member of $\mathcal{H}_k(n;m;\lceil m/k \rceil )$.
Thus, we may assume that $H'$ contains a matching of size $m-k+1$.
By Proposition~\ref{prp:easymindegree}(b), there exists a vertex $v_i \in T$ such that $\deg (v_i) \le n^{k-1} - (n-m)^{k-1} \le kmn^{k-2}$ as $n$ is large.
Since $v_i \in T$, $\deg (v_i) \ge n^{k-1}/4k$ implying that $n \le  4k^2m$, a contradiction.
A similar argument holds for the case when $m=1 \pmod{k}$.
\end{proof}

Next, we prove Corollary~\ref{cor:partialmatching}.

\begin{proof}[Proof of Corollary~\ref{cor:partialmatching}]
Suppose that $H$ is a $3$-partite $3$-graph with $\delta_1(H) > d_3(n,m)$.
First assume that $m \ne 1 \pmod3$ and so $d_3(n,m) = \delta_1(H_3(n;m))$.
By Theorem~\ref{thm:partialmatching}, $H$ either contains a matching of size $m+1$ or is a subgraph of $H' \in \mathcal{H}_3(n;m)$. 
Note that \eqref{eqn:maxd3(n,m)} implies that $\delta_1(H) > \delta_1(H')$ for all $H' \in \mathcal{H}_3(n;m)$.
Therefore, $H$ contains a matching of size $m+1$ and so the corollary is true for $m \ne 1 \pmod{3}$.

Thus, we may assume that $m=1 \pmod{3}$.
Suppose that $H$ does not contain a matching of size $m+1$, so Theorem~\ref{thm:partialmatching} implies that $H$ is a subgraph of $H' \in \mathcal{H}_3(n;m) \cup \mathcal{H}^*_3(n;m)$.
By~\eqref{eqn:maxd3(n,m)} and the fact that $\delta_1(H) > \delta_1(H_3(n;m))$, we deduce that $H$ is a subgraph of $H' \in  \mathcal{H}^*_3(n;m)$.
Recall that $d_3(n,m) = \delta_1(H_3(n;m-1))+1$ and the definition of $\mathcal{H}^*_3(n;m)$, Definition~\ref{dfn:H^*_k(n;m)}.
Therefore, to prove the corollary, it is enough to show that if $H''$ is a $3$-partite $3$-graph with $V(H'') = V(H)$ and the edge set of $H''$ forms an intersecting family, then $\delta_1(H'') \le 1$.
Let $V_1$, $V_2$ and $V_3$ be the partition classes of $H''$.
Suppose the contrary, $E(H'')$ is an intersecting family with $\delta_1(H'') \ge 2$.

First we claim that there exist two edges $e_1,e_2 \in E(H'')$ with $e_1 = x_1 x_2 u$, $e_2 = y_1 y_2 u$, distinct $x_i, y_i \in V_i$ and $u \in V_3$.
Pick $ u \in V_3$. 
Since $\deg(u) \ge 2$, there exists two edges containing $u$.
If these two edges only intersect at $u$, then the claim holds.
Without loss of generality, suppose $x_1 x_2 u$ and $x_1' x_2 u$ are the two edges with $x_1, x'_1 \in V_1$, $x_2 \in V_2$ and $u \in V_3$.
Let $y_2 \in V_2 \setminus x_2$.
Since $E(H'')$ is intersecting, all edges incident with $y_2$ must contains~$u$.
Note that $\deg(y_2) \ge 1$, so there exists an edge $e_2$ containing both $y_2$ and $u$.
Without loss of generality (by relabelling $x_1$ and $x_1'$ if necessary), we may assume that $e_2 = y_1 y_2 u$ with $y_1 \ne x_1$.
Therefore, the claim holds.

Let $w \in V_3 \setminus \{u\}$.
Recall that $\deg(w) \ge \delta_1(H'') \ge 2$ and $E(H'')$ is an intersecting family.
Therefore, $e_3 = x_1y_2w$ and $e_4 = y_1x_2w$ are edges.
However, $\{e_1,e_2,e_3,e_4,e\}$ is not an intersecting family for any edges $e$ satisfying $V(e) \cap \{u,w\} = \emptyset$, which exist as $|V_3| \ge 3$ and $\delta_1(H'') \ge 2$.
This is a contradiction.
\end{proof}

\section{An absorption lemma for $k$-partite $k$-graphs} \label{sec:absorption}

Here, we prove a $k$-partite version of the absorption lemma for $k$-graphs given by H{\`a}n, Person and Schacht~\cite{MR2496914}.
Its proof follows along the same lines as in~\cite{MR2496914}.
For completeness, we include the proof below.
First we need the following simple proposition.

\begin{prp} \label{prp:delta_l}
Let $H$ be a $k$-partite $k$-graph with $n$ vertices in each class.
For all $0 \le a \le 1$ and all integers $1 \le m \le l < k$, if $	\delta_l (H) \ge a n^{k-l} \textrm{, then } \delta_m (H) \ge a n^{k-m}$.
\end{prp}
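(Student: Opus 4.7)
The plan is to prove the contrapositive in the form of a standard averaging/extension argument: every legal $m$-set can be extended in $n^{l-m}$ ways to a legal $l$-set, and the total degree sums consistently over such extensions, which forces the $m$-degree bound from the $l$-degree bound.

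More concretely, I would fix an arbitrary legal $m$-set $T$ with $T \cap V_i = \emptyset$ for $i$ in some index set $I \subset [k]$ of size $k-m$. Choose any subset $J \subset I$ of size $l-m$. For each tuple $(v_j)_{j \in J} \in \prod_{j \in J} V_j$, the set $T' := T \cup \{v_j : j \in J\}$ is a legal $l$-set, so $\deg(T') \ge \delta_l(H) \ge x n^{k-l}$ by hypothesis. The key observation is the double-counting identity
\begin{align*}
\deg(T) \;=\; \sum_{(v_j) \in \prod_{j \in J} V_j} \deg\bigl(T \cup \{v_j : j \in J\}\bigr),
\end{align*}
which holds because every edge $e \supset T$ is counted on the right-hand side exactly once, namely by the tuple $(e \cap V_j)_{j \in J}$.

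Combining the identity with the lower bound on each summand and the fact that the sum ranges over $n^{l-m}$ tuples immediately yields $\deg(T) \ge n^{l-m} \cdot x n^{k-l} = x n^{k-m}$. Since $T$ was an arbitrary legal $m$-set, this gives $\delta_m(H) \ge x n^{k-m}$.

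There is really no obstacle here: the entire proposition is a one-line averaging argument once the right extension identity is written down. The only mild care needed is bookkeeping the partition structure to confirm that extending $T$ by one vertex from each of $l-m$ previously untouched classes indeed produces a legal $l$-set, and that every edge containing $T$ is recovered exactly once by this extension. This is presumably why the authors omit the proof.
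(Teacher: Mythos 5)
Your proof is correct and is essentially the paper's argument: both extend the legal $m$-set by one vertex from each of $l-m$ previously untouched classes and count edges containing the extended legal $l$-sets. The only cosmetic difference is that the paper sums over all $\binom{k-m}{l-m}$ choices of the extension classes and divides out the resulting multiplicity, whereas you fix a single choice of classes, for which the counting identity is exact.
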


\begin{proof}
Note that it suffices to prove the case when $m = l-1$. 
Let $V_1, \dots, V_k$ be the partition classes of $H$.
Let $T$ be a legal $(l-1)$-set in $V(H)$.
Without loss of generality, $T = \{v_1, \dots, v_{l-1}\}$ with $v_i \in V_i$ for $i \in [ l-1 ]$.
The condition on $\delta_l(H)$ implies that $T$ is contained in at least 
\begin{align}
	\sum_{v_l \in V_l} \deg( T \cup \{  v_l\} ) \ge n \delta_l(H) \ge a n^{k-l+1}. \nonumber
\end{align}
edges, and the proposition follows.
\end{proof}

\begin{lma}[An absorption lemma for $k$-partite $k$-graphs] \label{lma:absorptionlemma}
Let $1 \le l < k$, $0< \gamma < 1/(10k^3)$ and $\gamma' =\gamma^{2k-1}/20$.
Then there is an integer $n_0$ such that for all $n > n_0$ the following holds: Suppose $H$ is a $k$-partite $k$-graph with $n$ vertices in each class and minimum $l$-degree $\delta_l(H) \ge (1/2 + \gamma)n^{k-l}$, then there exists a matching $M$ in $H$ of size $|M| \le (k-1)\gamma^k n$ such that, for every balanced set $W$ of size $|W|  \le k \gamma' n$, there exists a matching covering exactly the vertices of $V(M) \cup W$.
\end{lma}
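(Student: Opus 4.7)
The plan is to adapt the absorption method of Rödl, Ruciński and Szemerédi to the $k$-partite setting, following the Hàn-Person-Schacht proof for 3-graphs (\cite{MR2496914}). By Proposition~\ref{prp:delta_l}, it suffices to treat the case $l=1$, so we may assume $\delta_1(H) \ge (\tfrac12 + \gamma) n^{k-1}$. Call a balanced $k(k-1)$-set $A \subset V(H) \setminus T$ a \emph{$T$-absorber} for a balanced $k$-set $T$ if both $H[A]$ and $H[A \cup T]$ contain perfect matchings; when such an $A$ sits inside a larger matching via its internal matching $M_A$, one can swap $M_A$ for a perfect matching of $A \cup T$, thereby absorbing $T$.

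The central step is to show that each balanced $k$-set $T = \{t_1,\dots,t_k\}$ admits at least $c_k \gamma\, n^{k(k-1)}$ distinct $T$-absorbers, for some $c_k = c_k(k) > 0$. To do this I would fix a canonical template on the vertex labels: index $A = \{a_{i,j} : i \in [k],\, j \in [k-1]\}$ with $a_{i,j} \in V_i$, let $M_A$ consist of the edges $\{a_{1,j},\dots,a_{k,j}\}$ for $j \in [k-1]$, and let $M_{A \cup T}$ be the matching obtained by cyclically inserting each $t_j$ into one of $k$ new balanced edges. This imposes $2k-1$ edge constraints on the tuple $(a_{i,j})$. I would then count ordered tuples satisfying the template by picking the $a_{i,j}$ greedily in an order ensuring that when $a_{i,j}$ is selected, each still-open constraint involving it has its remaining $(k-1)$ coordinates already fixed; the $\delta_1$ hypothesis then guarantees that a constant fraction of the $n$ candidates in $V_i$ remain. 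The main obstacle is the bookkeeping: arranging the order and the template so that each edge constraint is charged to a single choice step, and showing that the cumulative multiplicative loss over $O(k^2)$ constraints still leaves $\Omega(n^{k(k-1)})$ good tuples.

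Given this counting, a standard probabilistic deletion argument produces $M$. Sample a family $\mathcal F$ of balanced $k(k-1)$-subsets of $V(H)$ by including each such subset independently with probability $p$ of order $\gamma^k n^{1-k(k-1)}$. A first-moment bound gives $|\mathcal F| \le \gamma^k n / (2(k-1))$; a Chernoff bound ensures that every balanced $k$-set $T$ has at least $p \cdot c_k \gamma\, n^{k(k-1)}/2 \ge 2\gamma' n$ $T$-absorbers in $\mathcal F$ (using $\gamma' = \gamma^{2k} k(k-1)^2/2$ and the smallness of $\gamma$ in the lemma); and the expected number of pairs of absorbers in $\mathcal F$ sharing a vertex is $O(p^2 n^{2k(k-1)-1}) = o(\gamma^k n)$. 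Deleting one absorber from each intersecting pair yields a pairwise-disjoint family $\mathcal F'$ meeting both bounds. Set $M = \bigcup_{A \in \mathcal F'} M_A$, a matching of size at most $(k-1)|\mathcal F'| \le \gamma^k n$. For any balanced $W \subset V(H) \setminus V(M)$ with $|W| \le k\gamma' n$, partition $W$ arbitrarily into $s \le \gamma' n$ balanced $k$-sets $T_1,\dots,T_s$ and absorb them sequentially: for each $i$ pick a so-far-unused $T_i$-absorber $A_i \in \mathcal F'$ (possible since at most $s-1 < 2\gamma' n$ absorbers have been used) and replace $M_{A_i}$ by the perfect matching of $A_i \cup T_i$; the resulting matching covers exactly $V(M) \cup W$.
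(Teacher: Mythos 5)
Your overall architecture (reduce to $l=1$ via Proposition~\ref{prp:delta_l}, count absorbing $k(k-1)$-sets for each balanced $k$-set, select a random family, clean it up by a first-moment/Chernoff/Markov deletion, then absorb $W$ one balanced $k$-set at a time) is exactly the paper's, and the probabilistic half of your argument is fine modulo constants. The genuine gap is in the central counting step, which you leave as ``bookkeeping'' and which, as described, does not work. You propose to build the absorber vertex-by-vertex, choosing the $a_{i,j}$ in an order such that every still-open edge constraint through $a_{i,j}$ already has its other $k-1$ coordinates fixed, and you claim that $\delta_1$ then leaves a constant fraction of the $n$ candidates. But the number of vertices completing a \emph{fixed} legal $(k-1)$-set to an edge is a codegree, and the hypothesis $\delta_1(H)\ge(1/2+\gamma)n^{k-1}$ gives no lower bound on codegrees whatsoever: a fixed $(k-1)$-set may lie in no edge at all. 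Since in your template every $a_{i,j}$ lies in two constraint edges and there are $2k-1$ constraints, any vertex-by-vertex order must close each constraint at the step where its last vertex is chosen, i.e.\ it must invoke precisely such a codegree bound at least $2k-1$ times; so the greedy count collapses.

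The paper avoids this by never fixing $k-1$ coordinates of an unconstrained edge: it first picks an edge $\{v_1,u_2,\dots,u_k\}$ through $v_1$ (at least $n^{k-1}/2$ choices, a vertex-degree count), and then, for each $j\in[2,k]$, chooses an entire $(k-1)$-set $U_j$ \emph{in one step} so that both $U_j\cup\{u_j\}$ and $U_j\cup\{v_j\}$ are edges; the number of such $U_j$ is at least $\deg(u_j)+\deg(v_j)-n^{k-1}\ge 2\gamma n^{k-1}$ by inclusion--exclusion on two vertex degrees, minus $O(k^2n^{k-2})$ for disjointness from earlier vertices. This is where the threshold $1/2+\gamma$ enters, and it yields at least $\gamma^{k-1}n^{k(k-1)}/\bigl(2(k-1)^k\bigr)$ absorbers (Claim~\ref{clm:numberofabsorbingm-set}) --- a $\gamma^{k-1}$-fraction, not the ``constant fraction'' or $c_k\gamma$-fraction you assert; your stated count is too strong even for the correct construction, though the weaker $\gamma^{k-1}$ bound still suffices for the deletion step since $\gamma'=\gamma^{2k}k(k-1)^2/2$. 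To repair your write-up you should replace the vertex-by-vertex greedy by this block-wise choice (or any scheme in which each edge constraint is closed by selecting at least $k-1$ of its vertices simultaneously through a single previously fixed vertex), and adjust the absorber count accordingly.
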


\begin{proof}
Let $H$ be a $k$-partite $k$-graph with partition classes $V_1, \dots, V_k$ each of size $n$ and minimum $l$-degree $\delta_l(H) \ge (1/2 + \gamma)n^{k-l}$.
From Proposition~\ref{prp:delta_l}, $\delta_1(H) \ge (1/2 + \gamma)n^{k-1}$ and it suffices to prove the lemma for $l=1$.
Throughout the proof we may assume that $n_0$ is chosen sufficiently large.
Furthermore set $m=k(k-1)$ and call a balanced $m$-set $A$ an \emph{absorbing $m$-set for} a balanced $k$-set $T$ if $A$ spans a matching of size $k-1$ and $A \cup T$ spans a matching of size $k$.
In other words, $A \cap T = \emptyset$ and both $H[A]$ and $H[A \cup T]$ contain perfect matchings.
Denote by $\mathcal{L}(T)$ the set of all absorbing $m$-sets for $T$.
Next, we show that for every balanced $k$-set $T$, there are many absorbing $m$-sets for $T$.

\begin{clm} \label{clm:numberofabsorbingm-set}
For every balanced $k$-set $T \subseteq V(H)$, $|\mathcal{L}(T)| \ge \gamma^{k-1} n^{m}/2((k-1)!)^{k}$.
\end{clm}

\begin{proof}
Let $T= \{v_1, \dots, v_k\}$ be fixed with $v_i \in V_i$ for $i \in[k]$.
Since $n$ was chosen large enough, there are at most $(k-1)n^{k-2} \le \gamma n^{k-1}$ edges, which contain $v_1$ and $v_j$ for some $j \in [2,k]$.
By $\delta_1(H)$ there are at least $n^{k-1}/2$ edges containing $v_1$ but none of $v_2, \dots, v_k$.
We fix one such edge $v_1 u_2 \dots u_k$ with $u_i \in V_i$ for $i \in [2,k]$.
Set $U_1 = \{ u_2, \dots, u_k\}$ and $W_0 = T$.
For each $j \in [2,k]$ and each pair $u_j, v_j$ suppose we succeed to choose a $(k-1)$-set $U_j$ such that $U_j$ is disjoint to $W_{j-1} = U_{j-1} \cup W_{j-2}$ and both $U_j \cup \{ u_j \}$ and $U_j \cup \{ v_j\}$ are edges in $H$.
Then for a fixed $j \in [2,k]$ we call such a choice $U_j$ \emph{good}, motivated by $A = \bigcup_{j \in [k]} U_j$ being an absorbing $m$-set for $T$.

Note that in each step $j \in [2,k]$ there are $k+(j-1)(k-1)$ vertices in $W_{j-1}$.
Moreover, there are at most $j \le k$ vertices in $V_i \cap W_{j-1}$ for all $i \in [k]$.
Thus, the number of edges intersecting $u_j$ (or $v_j$ respectively) and at least one other vertex in $W_j$ is at most $(k-1) j n^{k-2} < k^2n^{k-2} \le \gamma n^{k-1}$.
Note that there are at least $2 \gamma n^{k-1}$ sets $U'$ such that both $U' \cup \{ u_j \}$ and $U' \cup \{ v_j\}$ are edges in $H$.
Hence, for each $j \in[2,k]$, there are at least $2 \gamma n^{k-1} - \gamma n^{k-1}  =\gamma n^{k-1}$ good choices for $U_j$.
Thus, in total we obtain $\gamma^{k-1} n^{k(k-1)}/2$ absorbing $m$-sets for $T$ with multiplicity at most $((k-1)!)^{k}$.
\end{proof}

Now, choose a family $F$ of balanced $m$-sets by selecting each of the $\binom{n}{k-1}^k$ possible balanced $m$-sets independently with probability 
\begin{align}
	p = \frac{\gamma^k  ((k-1)!)^k} { 2 n^{m-1} }\le \frac{\gamma^k n} {2 \binom{n}{k-1}^k } .\label{eqn:probabiltyforabsorbinglemma}
\end{align}
Then by Chernoff's bound (see e.g.~\cite{MR1885388}) with probability $1-o(1)$ as $n \rightarrow \infty$, the family $F$ satisfies the following properties:
\begin{align}
|F| & \le  \gamma^k n\label{eqn:|F|}
\intertext{and}
|\mathcal{L}(T) \cap F| & \ge  p |\mathcal{L}(T)|/2 \ge \gamma^{2k-1} n/10 =  2 \gamma' n \label{eqn:L(T)}
\end{align} 
for all balanced $k$-sets $T\subseteq V(H)$.
Furthermore, we can bound the expected number of intersecting pairs of $m$-sets from above by
\begin{align}
	\binom{n}{k-1}^k \times k(k-1) \times \binom{n}{k-2}  \binom{n}{k-1}^{k-1} \times p^2 \le  \gamma^{2k} k^3 n/4 \le \gamma' n/2.
\nonumber
\end{align}
Thus, using Markov's inequality, we derive that with probability at least $1/2$
\begin{align}
	\textrm{$F$ contains at most $ \gamma' n$ intersecting pairs of $m$-sets.} \label{eqn:F}
\end{align}
Hence, with positive probability the family $F$ has all properties stated in \eqref{eqn:|F|}, \eqref{eqn:L(T)} and \eqref{eqn:F}.
Delete one $m$-set from each intersecting pair in such a family~$F$.
Further remove all non-absorbing $m$-sets, we get a subfamily~$F'$ consisting of pairwise disjoint balanced $m$-sets, which satisfies
\begin{align}
|\mathcal{L}(T) \cap F'| & \ge  2\gamma' n - \gamma' n \ge \gamma' n \nonumber
\end{align}
for all balanced $k$-sets $T$.
Since $F'$ consists only of absorbing $m$-sets, $H[V(F')]$ has a perfect matching $M$ of size at most $(k-1) \gamma^k n$ by~\eqref{eqn:|F|}.
For a balanced set $W \subseteq V \setminus V(M)$ of size $|W| \le k\gamma' n$, $W$ can be partition into at most $\gamma' n$ balanced $k$-sets.
For each balanced $k$-set $T$, we can successively choose a distinct absorbing $m$-set for $T$ in $F'$.
Hence, there exists a matching covering $V(M) \cup W$. 
\end{proof}

\section{Finding large matchings in $3$-partite $3$-graphs.} \label{sec:asymptotic}

Let $H$ be a $3$-partite $3$-graph and let $M$ be a matching satisfying the conditions of the absorption lemma, Lemma~\ref{lma:absorptionlemma}.
We remove the vertices of $M$ from $H$ and call the resulting graph $H'$.
Suppose that there exists a large matching $M_1$ in $H'$ covering almost all the vertices in $H'$. 
Let $W$ be the set of `leftover' vertices.
By the `absorption' property of $M$ there is a perfect matching $M_2$ in $H[V(M) \cup W]$.
Hence, $M_1 \cup M_2$ is a perfect matching in $H$.
Therefore, finding a large matching $M_1$ in $H'$ plays a crucial role in the proof of Theorem~\ref{thm:exact}.
However, since we do not have any control over the structure of $M$, $H'$ might not contain a large matching. 
If $H'$ does not contain a large matching, then we show that $H'$ contains a large subgraph, which `looks like' the extremal graph $H'_3(n';n')$ (defined in Section~\ref{sec:3partite}).

The aim of this section is to investigate the problem of finding large matchings in $3$-partite $3$-graphs. 
However, this problem is quite different from finding large matchings in $3$-graphs.
To this end, we introduce the concept of edge-coloured complete graphs, called matching graphs (defined in Section~\ref{sec:matchinggraphs}).
We study some of its properties, Lemmas~\ref{lma:forbiddencolouring} and~\ref{lma:app1}.
Using these results, we present a short proof showing that $m'_1(3,n) \sim 5n^2/9$, see Theorem~\ref{thm:asymptotic} below, which is the asymptotic version of Theorem~\ref{thm:exact}.

\begin{thm} \label{thm:asymptotic}
For all $0 < \gamma <10^{-6}$, there exists an integer $n_0 = n_0(\gamma)$ such that if $H$ is a $3$-partite $3$-graph with $n> n_0$ vertices in each class and $\delta_1(H) \ge \left( 5/9+ \gamma  \right) n^2$,
then $H$ contains a perfect matching.
\end{thm}

By further analyzing the structures of $H'$ and the matching graphs, we show that a $3$-partite $3$-graph $H$ with $\delta_1(H) \ge (5/9-\gamma)n$ either contain a large matching or `looks like' the extremal graph $H'_3(n';n')$, Lemma~\ref{lma:exact}. 

\subsection{Notations for $3$-partite $3$-graphs} \label{sec:3partite}

Now, we set up notations for $3$-partite $3$-graphs $H$ with partition classes, $V_1$, $V_2$, $V_3$, each of size $n$.
Given three vertex sets $U_1$, $U_2$, $U_3$, not necessarily disjoint, we say an edge $u_1u_2u_3$ is \emph{of type $U_1U_2U_3$} if $u_i \in U_i$ for all $i \in [3]$.

Recall the definition of $H(n;d_1,d_2, d_3)$ in Definition~\ref{dfn:H_k(n;m)}.
For $i \in [3]$, let $d_i \le n$ and define $H'(n;d_1,d_2, d_3)$ to be the resulting subgraph of $H(n;d_1,d_2, d_3)$ after removing all edges of type $WWW$.
In other words, $H'(n;d_1,d_2, d_3)$ is the $3$-partite $3$-graph with partition classes $U_1 \cup W_1$, $U_2 \cup W_2$, $U_3 \cup W_3$ consisting of all those edges of type $UUW$ and $UWW$, where $U = U_1 \cup U_2 \cup U_3$, $W = W_1 \cup W_2 \cup W_3$ and $|W_i| = d_i$ for all $i \in [3]$.
We write $H'_3(n;m)$ for $H'_3(n;\lfloor \frac{m}3 \rfloor, \lfloor \frac{m+1}3 \rfloor,\lfloor \frac{m+2}3 \rfloor)$.

Given $\varepsilon>0$, we say that $H$ is \emph{$\varepsilon$-close to $H'(n;d_1,d_2, d_3)$} if $|E(H'(n;d_1,d_2, d_3)-H)| \le \varepsilon n^3$.

The following lemma shows that if $\delta_1(H) \ge( 5/9-\gamma)n$ and $H$ does not contain a large matching, then $H$ contains a large subgraph, which is $\eps$-close to $H'_3(n';n')$. 
This lemma is proved in Section~\ref{sec:epsilon-close} by further analyzing the structure of $H$.

\begin{lma} \label{lma:exact}
Let $0< 4\rho < \gamma < 10^{-6}$ and $ \gamma'' = 3(1200 \gamma +\sqrt{\gamma/2})$.
There exists an integer $n_0$ such that if $H$ is a $3$-partite $3$-graph with $n>n_0$ vertices in each class and $\delta_1(H) \ge \left( 5/9 - \gamma \right) n^2$ and contains no matching of size $(1-\rho)n$, then there exists a subgraph $H'$ such that $H'$ is $8 \gamma''$-close to $H'_3(n';n')$, where $n' \ge (1- \gamma'')n$ and $3 |n'$.
\end{lma}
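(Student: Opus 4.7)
The plan is to re-run the analysis from the proof of Lemma~\ref{lma:Asymptotic} with the signs of the $\gamma$-terms reversed, then convert the resulting near-partition of the matching into a vertex partition witnessing closeness to $H_3'(n';n')$. Let $M$ be a maximum matching in $H$, so $m := |M| < (1-\rho)n$, and set $X_i := V_i \setminus V(M)$, $X := X_1 \times X_2 \times X_3$.

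As noted at the end of Section~\ref{sec:asymptotic}, Claims~\ref{clm:P(S)}--\ref{clm:(2,,)(,2,)intersection} depend only on the maximality of $M$, and only the final conclusion $|G_S'| \ge (1/3 + \gamma/8)m$ used the strong minimum-degree bound. Under the weaker hypothesis $\delta_1(H) \ge (5/9 - \gamma)n^2$, inequality \eqref{eqn:L_x(M)} reads $e(L_x(M)) \ge (10/9 - O(\gamma))\binom{m}{2}$ (where we may assume $m$ is close to $n$, else a direct counting using $|L_x(M)| \le m^2 + 2(n-m)n$ contradicts the degree bound). Running the chain \eqref{eqn:L_x(M)2}--\eqref{eqn:(,,0)} with reversed signs yields, for at least a $3/4$-fraction of $S \in X$,
\begin{align}
|G_S'(0,\cdot,\cdot)|,\ |G_S'(\cdot,0,\cdot)|,\ |G_S'(\cdot,\cdot,0)| \;\ge\; \left(\tfrac{1}{3} - O(\gamma)\right) m, \nonumber
\end{align}
with pairwise intersection at most $\gamma m / 8$. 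Fix one such $S = (x_1,x_2,x_3)$ and partition $M = M_0 \cup M_1 \cup M_2 \cup M_3$, where $M_i$ consists of the edges $e \in V(G_S'(\cdot,\cdot,0))$ with the zero in slot $i$ only, and $M_0$ absorbs the $O(\gamma)m$ residue. Set $W_i := V_i \cap V(M_i)$ and $U_i := V_i \setminus W_i$.

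Trimming at most $O(\gamma'' n)$ vertices per class to meet the size requirements produces balanced sets $V_i' = U_i' \cup W_i'$ with $|V_i'| = n'$, $3 \mid n'$, $n' \ge (1-\gamma'')n$, and $|W_i'| = \lfloor (n'+i-1)/3\rfloor$. The remaining task is to show that at most $8\gamma''(n')^3$ balanced triples of type $U'U'W'$ or $U'W'W'$ are missing from $E(H)$. A triple of type $U_1'U_2'U_3'$ lying in $E(H)$ would admit an $M$-augmentation: because each $e_j \in M_i$ lies in $V(G_S'(\cdot,\cdot,0))$ with the zero in slot $i$, the pair $v_{i',j}v_{i'',j}$ lies in the link of many $x \in X_i$, so substituting $e_j$ by the $UUU$-edge together with a linking edge enlarges $M$, contradicting maximality; this caps the number of $UUU$-edges. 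Balanced triples of type $W_1'W_2'W_3'$ are bounded by $|W_1'||W_2'||W_3'| \le (n'/3 + O(\gamma'' n))^3$, contributing only $O(\gamma''(n')^3)$ to the deficit after comparison with $H_3'(n';n')$. Combining with the degree lower bound $3|E(H)| \ge (5/3 - 3\gamma)n^3$ and an averaging step across the vertex classes converts the edge-deficit into the required vertex-level closeness.

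The main obstacle will be controlling the $UUU$-augmentation uniformly across all such edges so that the loss remains $O(\gamma'')$: the rerouting for an arbitrary $UUU$-edge must avoid the $O(\gamma m)$ edges of $M_0$ and select a compatible $x \in X_i$ not already committed to another augmentation. The $\sqrt{\gamma/2}$ summand in $\gamma''$ enters at this stage through a Cauchy--Schwarz-type inequality translating a global edge-deficit of order $\gamma n^3$ into a vertex-level defect of order $\sqrt{\gamma}\,n$, while the linear $1200\gamma$ summand accounts for the cumulative $O(\gamma)$ slack from the structural estimates above.
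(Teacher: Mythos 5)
Your proposal has two genuine gaps, and both are at the places where the paper has to do real work beyond rerunning Lemma~\ref{lma:Asymptotic} with reversed signs. First, you never establish that the maximum matching is large, and the parenthetical argument you offer for it is false: with $x=n-m$, the trivial bound $|L_{x_1}(M\cup X)|\le m^2+2xn=n^2+x^2\ge n^2$ exceeds $(5/9-\gamma)n^2$ for \emph{every} $m$, so no contradiction arises from direct counting, no matter how small $m$ is. In the paper this step is Claims~\ref{clm: e(L_S[X,M])} and~\ref{clm:L_x(M)}: one first shows by an augmentation argument that for most $x_i\in X_i$ the number of edges through $x_i$ meeting both $V(M)$ and $X$ is at most $(1+\sqrt{\gamma})mx$, and only then does the degree hypothesis force $e(L_{x_i}(M))\ge (5/9-\gamma)(m^2+x^2)$, which for $x\ge 3\sqrt{\gamma/2}\,m$ reaches the $(10/9+\gamma)\binom{m}{2}$ threshold of the asymptotic argument and contradicts property $(c)$; hence $m\ge(1-3\sqrt{\gamma/2})n$. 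This is where the $\sqrt{\gamma/2}$ summand of $\gamma''$ actually enters (not through a Cauchy--Schwarz conversion of an edge deficit into a vertex defect), and without it you cannot even guarantee $n'\ge(1-\gamma'')n$.

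Second, you build $M_1,M_2,M_3$ from a single triple $S$, and you yourself flag the control of $UUU$-augmentations as the unresolved obstacle; that obstacle is precisely why the paper does not use one $S$. Via the box-theorem argument of Claim~\ref{clm:S_1S_2} it finds two \emph{disjoint} triples $S_1,S_2\in X''$ whose colourings agree up to $O(\gamma)\binom{m}{2}$ pairs, and all subsequent exchanges are performed in the common structure: the matchings used in Claim~\ref{clm:exactconfig} (disjointness and consistency of $M_1,M_2,M_3$) and Claim~\ref{clm:u1U2U3} (few $UUU$ edges) contain two link pairs avoiding the same class $V_i$, and each such pair consumes its own uncovered vertex of $X_i$, so two compatible triples are indispensable (the asymptotic Claim~\ref{clm:e_S(2,1,1)} even needs four). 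With one $S$ these augmentations are unavailable, so the structural facts your final count presupposes remain unproven. A further minor point: triples of type $W_1'W_2'W_3'$ are irrelevant to the closeness estimate, since $H_3'(n';n')$ has no $WWW$ edges; what must be shown is that almost all $UUW$ and $UWW$ triples are present, which the paper deduces from the degree condition in the trimmed graph together with the $UUU$ bound.
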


\subsection{Matching graphs} \label{sec:matchinggraphs}

Given a $3$-partite $3$-graph $H$ and a vertex $x \in V$, let $L_x$ be the \emph{link graph of $x$} such that the edge set of $L_x$ is precisely the set of all $2$-sets $T \subseteq V$ such that $\{ x \} \cup T$ is an edge in $H$.
Note that $L_x$ is a $2$-graph. 
Given disjoint vertex sets $U_1$ and $U_2$, let $L_x[U_1, U_2]$ be the bipartite subgraph of $L_x$ induced by partition classes $U_1$ and $U_2$.
Given a matching $M= \{e_1, \dots, e_p\}$, we write $L_x(e_1, \dots, e_p)$ for $\bigcup_{j \in [p-1]} L_x[V(e_j), V(e_{j+1})] $.
Let $L_x[M] = \bigcup_{(i,j) \in \binom{[p]}{2}} L_x[V(e_i), V(e_{j})] $.

Let $S$ be a triple $(x_1,x_2,x_3)$ with $x_i \in V_i$ for all $i \in [3]$.
The \emph{link graph $L_S$ of $S$} is defined to be the union of $L_{x_1}$, $L_{x_2}$ and $L_{x_3}$.
Note that for each $i \in[3]$, $L_{x_i}$ only has edges between $V_{i+1}$ and $V_{i+2}$ (addition modulo 3), so $L_S$ does not contain any multiple edges.
We define $L_S(e_1, \dots, e_p)$ and $L_S[M]$ analogusly for a matching $M= \{e_1, \dots, e_p\}$.

We say that a pair of vertex-disjoint edges $(e_1,e_2)$ is of \emph{type $(a_1,a_2,a_3)$ with respect to $S$} if $e(L_{x_i}(e_1,e_2)) = a_i$ for all $i\in[3]$.
By the definition of $L_{x_i}(e_1,e_2)$, we have $0 \le a_1,a_2,a_3 \le 2$.
Let $M$ be a matching in $H$.
Define the \emph{matching graph $G_S(M)$ with respect to $S$ and $M$} to be the edge-coloured complete graph with vertex set $M$ and for distinct $e_1,e_2 \in M$, the edge $(e_1,e_2)$ in $G_S(M)$ is coloured $(a_1,a_2,a_3)$ if and only if $(e_1,e_2)$ is of type $(a_1,a_2,a_3)$ with respect to $S$.
If $M$ is known from the context, then we simply write $G_S$ for $G_S(M)$.
From now on, all edge-coloured graphs are assumed have colours $(a_1,a_2,a_3)$ for $0 \le a_1,a_2,a_3\le 2$.
Define $c(vu)$ to be the colour of the edge $vu$.

\begin{dfn}[$p$-extensible] \label{dfn:ext}
Given an integer $p >0$, we say that an edge-coloured path $P = v_1 v_2\dots v_l$ is \emph{$p$-extensible} if the following statement holds.
Let $M = \{e_1, \dots, e_l\}$ be a matching of size $l$ and let $X_i = V_i \setminus V(M)$ for $i \in [3]$.
Let $S_1,S_2, \dots, S_p \in X_1 \times X_2 \times X_3$ be vertex-disjoint.
Suppose that for all $j \in [l-1]$ and for all $i,i' \in [p]$, $L_{S_i}(e_j,e_{j+1}) = L_{S_{i'}}(e_j,e_{j+1})$ and $(e_j,e_{j+1})$ is of type $c(v_jv_{j+1})$ with respect to $S_i$.
Then there exists a matching $M'$ of size $l+1$ in $H[V(M) \cup \bigcup_{i \in [p]} V(S_i)]$.
\end{dfn}

By the definition of $p$-extensible, we obtain the following fact, which we omit its proof.

\begin{fact} \label{fact:extensible}
Let $p$, $r$ and $s$ be integers with $r \le s$.
Let $P = v_1 \dots v_s$ be an edge-coloured path.
Let $H$ be a $3$-partite $3$-graph with partition classes $V_1$, $V_2$ and $V_3$.
Let $M = \{e_1, \dots, e_r\}$ be a matching in $H$ and $S = (x_1, x_2,x_3)$ with $x_i \in V_i \setminus V(M)$ such that the edge-coloured path $e_1 \dots e_r$ in $G_S(M)$ is an edge-coloured subpath of~$P$.
Further suppose that $L_S(e_1,\dots, e_r) = \bigcup_{j \in [r-1]} L_S(e_j,e_{j+1})$ contains a matching $M_0$ of size $r+1$ with at most $p$ edges between any two $V_i$'s.
Then $P$ is $p$-extensible.
\end{fact}

In the next lemma, we study a few short edge-coloured paths and their extensibilities.

\begin{lma} \label{lma:forbiddencolouring}
The following statements (and the corresponding statements obtained by swapping the indices) hold.
\begin{itemize}
\item[(i)] For $a_1 +a_2+a_3 \ge 5$, an edge with colour $(a_1,a_2,a_3)$ is $1$-extensible.  
\item[(ii)] A monochromatic path of length 5 with colour $(1,2,1)$ is $4$-extensible.
\item[(iii)] For integers $1 \le a_2 \le 2$ and $0 \le a'_1, a'_2 \le 2$, a path $P =v_1 \dots v_5$ of length $4$ such that $v_iv_{i+1}$ is coloured $(0,a_2,2)$ for $i \in [3]$ and $v_4v_5$ is coloured $(2,a'_1,a'_2)$ is $3$-extensible.
\item[(iv)] A path $P = v_1 \dots v_6$  of length $5$ such that $v_iv_{i+1}$ is coloured $(0,1,2)$ for $i \in [3]$ and $(1,0,2)$ for $i \in \{4,5\}$ is $5$-extensible.
\end{itemize}

\end{lma}

\begin{proof}
Let $H$ be a $3$-partite $3$-graph with partition classes $V_1$, $V_2$ and $V_3$.
For the rest of the proof, $M = \{e_1, \dots, e_r\}$ is assumed to be a matching of size~$r$ in~$H$ and $S = (x_1, x_2,x_3)$ with $x_i \in V_i \setminus V(M)$.
Recall that $\{v_{i,j}\} = V_i \cap e_j $.

(i)
Suppose that $(e_1,e_2)$ is of type $(a_1,a_2,a_3)$ with $a_1 +a_2+a_3 \ge 5$.
Notice by the definition of $L_S(e_1,e_2)$ that if $i=i'$ or $j = j'$ then $v_{i,j} v_{i',j'}$ is not an edge in $L_S(e_1,e_2)$.
Hence, there exists a matching $M_0 = \{f_1, f_2,f_3\}$ of size 3 in $L_S(e_1,e_2)$.
Since $|V_i \cap V(L_S(e_1,e_2))| =2$ for $i \in [3]$, without loss of generality we may assume that $V(f_i) \cap V_i = \emptyset$ for $i \in [3]$.
Hence (i) holds by Fact~\ref{fact:extensible}.

(ii)
Suppose the contrary that $P$ is not $4$-extensible.
Let $s =6$ and $P = e_1 e_2 \dots e_6$ be a monochromatic path of length 5 with colour $(1,2,1)$ in $G_S(M)$.
By Fact~\ref{fact:extensible}, we obtain a contradiction if $L_S(e_1,e_2)$ contains a matching of size~$3$.
Thus, we may assume that $L_S(e_1,e_2)$ is a path of length~$4$.
To be precise, $L_S(e_1,e_2) = v_{1,2}v_{3,1}v_{2,2}v_{1,1}v_{3,2}$ or $ v_{1,1}v_{3,2}v_{2,1}v_{1,2}v_{3,1}$.
We write $\overrightarrow{e_1e_2}$ if $L_S(e_1,e_2) = v_{1,2}v_{3,1}v_{2,2}v_{1,1}v_{3,2}$, and  write $\overrightarrow{e_2e_1}$ otherwise.
Similar statements also hold for $L_S(e_i,e_{i+1})$ for $i \in [5]$.
Thus, we may assume that $P$ is oriented (but may not be directed).
In the claim below, we show that $P$ does not contain any of the following oriented subpaths.

\begin{clm} \label{clm:forbiddencolouring1}
If $P$ is not $4$-extensible, then $P$ does not contain
\begin{itemize}
	\item[(a)] a path $u_1u_2u_3u_4$ with $\overleftarrow{u_1u_2}$, $\overleftarrow{u_2u_3}$ and $\overleftarrow{u_3u_4}$;
	\item[(b)] a path $u_1u_2u_3u_4$ with $\overrightarrow{u_{1}u_2}$, $\overrightarrow{u_{2}u_3}$ and $\overleftarrow{u_{3}u_4}$;
	\item[(c)]  a path $u_1u_2u_3u_4u_5$ with $\overrightarrow{u_{1}u_2}$, $\overleftarrow{u_{2}u_3}$, $\overrightarrow{u_{3}u_4}$ and $\overleftarrow{u_{4}u_5}$;
	\item[(d)] a path $u_1u_2u_3u_4u_5$ with $\overrightarrow{u_{1}u_2}$, $\overleftarrow{u_{2}u_3}$, $\overrightarrow{u_{3}u_4}$ and $\overrightarrow{u_{4}u_5}$.
\end{itemize}
\end{clm}

\begin{proof}[Proof of claim]
(a) Suppose we have edges $e_1, \dots, e_4$ with $\overleftarrow{e_1e_2}$, $\overleftarrow{e_2e_3}$ and $\overleftarrow{e_3e_4}$.
There exists a matching of size 5 in $L_{S}(e_1,e_2,e_3,e_4)$, namely $\{v_{1,1} v_{3,2}$, $v_{3,1} v_{1,2}$, $v_{2,2} v_{1,3}$, $v_{2,3} v_{3,4}$, $v_{3,3} v_{1,4} \}$ (see Figure~\ref{fig:clm:e_S(2,1,1)}~$(A)$), and so $P$ is $4$-extensible by Fact~\ref{fact:extensible}, a contradicition.

(b) Suppose we have edges $e_1, \dots, e_4$ with $\overrightarrow{e_1e_2}$, $\overrightarrow{e_2e_3}$, $\overleftarrow{e_3e_4}$.
There exists a matching of size 5 in $L_{S}(e_1,e_2,e_3,e_4)$, namely $\{v_{1,1} v_{2,2}$, $v_{3,1} v_{1,2}$, $v_{3,2} v_{1,3}$, $v_{2,3} v_{3,4}$, $v_{3,3} v_{1,4} \}$ (see Figure~\ref{fig:clm:e_S(2,1,1)}~$(B)$), and so $P$ is $4$-extensible by Fact~\ref{fact:extensible}, a contradicition.

(c) Suppose we have edges $e_1, \dots, e_5$ with $\overrightarrow{e_1e_2}$, $\overleftarrow{e_2e_3}$, $\overrightarrow{e_3e_4}$ and $\overleftarrow{e_4e_5}$.
Note that there is a matching of size 6 in $L_{S}(e_1,e_2,e_3,e_4,e_5)$, namely $\{v_{1,1} v_{3,2}$, $v_{3,1} v_{1,2}$, $v_{2,2} v_{1,3}$, $v_{3,3} v_{1,4}$, $v_{3,4} v_{1,5}$, $v_{2,4}v_{3,5}\}$ (see Figure~\ref{fig:clm:e_S(2,1,1)}~$(C)$), and so $P$ is $4$-extensible by Fact~\ref{fact:extensible}, a contradicition.

(d) Suppose we have edges $e_1, \dots, e_5$ with $\overrightarrow{e_1e_2}$, $\overleftarrow{e_2e_3}$, $\overrightarrow{e_3e_4}$ and $\overrightarrow{e_4e_5}$.
Note that there is a matching of size 6 in $L_{S}(e_1,e_2,e_3,e_4,e_5)$, namely $\{v_{1,1} v_{3,2}$, $v_{3,1} v_{1,2}$, $v_{2,2} v_{1,3}$, $v_{3,3} v_{2,4}$, $v_{1,4} v_{3,5}$, $v_{3,4}v_{1,5}\}$ (see Figure~\ref{fig:clm:e_S(2,1,1)}~$(D)$), and so $P$ is $4$-extensible by Fact~\ref{fact:extensible}, a contradicition.
\begin{figure}[tbp]
\centering
\subfloat[]{
\includegraphics[scale=0.6]{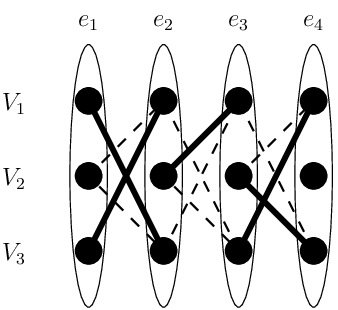}}
\subfloat[]{
\includegraphics[scale=0.6]{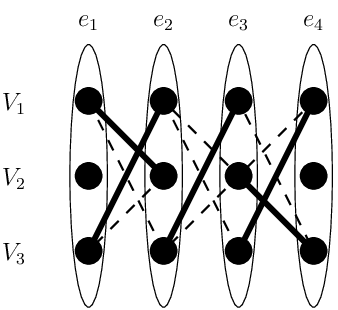}}

\subfloat[]{
\includegraphics[scale=0.6]{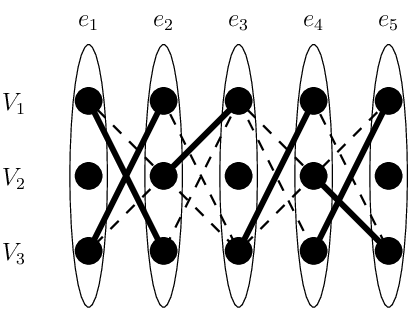}}
\subfloat[]{
\includegraphics[scale=0.6]{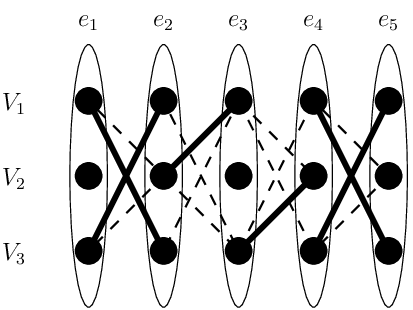}}
\caption{Diagrams for Claim~\ref{clm:forbiddencolouring1}}
\label{fig:clm:e_S(2,1,1)}
\end{figure}
\end{proof}

Recall that $P=e_1 \dots e_6$ is an oriented path of length~$5$ and not $4$-extensible.
Without loss of generality, we may assume that $\overleftarrow{e_3e_4}$.
By Claim~\ref{clm:forbiddencolouring1}(b) on the subpaths $e_1e_2e_3e_4$ and $e_4e_3e_2e_1$, we deduce that $\overleftarrow{e_1e_2}$.
Hence, $ \overrightarrow{e_2e_3}$ by Claim~\ref{clm:forbiddencolouring1}(a) on the subpath $e_1e_2e_3e_4$.
By Claim~\ref{clm:forbiddencolouring1}(b) taking $u_1u_2u_3u_4 = e_5e_4e_3e_2$, we have $ \overrightarrow{e_4e_5}$.
Therefore, the path $e_2 e_3 e_4 e_5 e_6$ satisfies (c) or~(d), so $P$ is $4$-extensible.

(iii)
Let $e_1, \dots , e_5$ be edges such that the edge-coloured path $e_1 \dots e_5$ in $G_S$ is isomorphic to $P$.
We will only consider the case when $a_2 = 1$ and $a'_1 = 0 = a'_2$ (since the arguments for the other cases are similar).
Hence, $e_2e_3$ is coloured $(0,1,2)$ in $G_M$ and so either $v_{1,2}v_{3,3} \in L_S(e_2,e_3)$ or $v_{1,3}v_{3,2} \in L_S(e_2,e_3)$.
If $v_{1,2}v_{3,3} \in L_S(e_2,e_3)$, then $\{v_{1,2}v_{3,3}, v_{2,2}v_{1,3}, v_{2,3}v_{1,4}, v_{2,4}v_{3,5}, v_{3,4} v_{2,5}\}$ (see Figure~\ref{fig:clm:(2,,)(,2,)intersection}~$(A)$) is a matching of size $5$ in $L_S(e_2,\dots, e_5)$.
If $v_{1,3}v_{3,2} \in L_S(e_2,e_3)$, then $\{v_{1,1}v_{2,2}, v_{1,2}v_{2,1}, v_{3,2}v_{1,3}, v_{2,3}v_{1,4}, v_{2,4}v_{3,5}, v_{3,4}v_{2,5}\}$ (see Figure~\ref{fig:clm:(2,,)(,2,)intersection}~$(B)$) is a matching of size $6$ in $L_S(e_1,\dots, e_5)$.
By Fact~\ref{fact:extensible}, both cases imply that $P$ is $3$-extensible.
\begin{figure}[tbp]
\centering
\subfloat[]{
\includegraphics[scale=0.6]{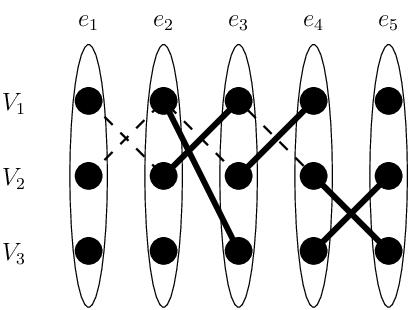}}
\subfloat[]{
\includegraphics[scale=0.6]{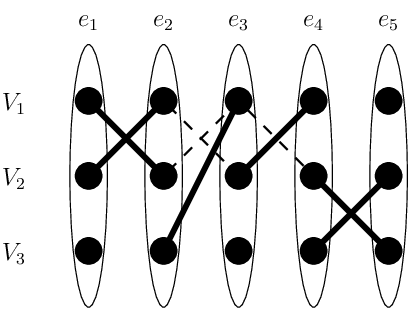}}
\caption{Diagrams for Lemma~\ref{lma:forbiddencolouring}(iii)}
\label{fig:clm:(2,,)(,2,)intersection}
\end{figure}

(iv)
Let $e_1, \dots , e_6$ be edges such that the edge-coloured path $e_1 \dots e_6$ in $G_S$ is isomorphic to $P$.
Since $e_2e_3$ is coloured $(0,1,2)$ in $G_M$, either $v_{3,2}v_{1,3} \in L_S(e_2,e_3)$ or $v_{1,2}v_{3,3} \in L_S(e_2,e_3)$.
If $v_{3,2}v_{1,3} \in L_S(e_2,e_3)$, then $\{v_{1,1}v_{2,2}, v_{2,1}v_{1,2}, v_{3,2}v_{1,3} \}$ is a matching of size~$3$ in $L_S(e_1, e_2, e_3)$.
If $v_{1,2}v_{3,3} \in L_S(e_2,e_3)$, then $\{ v_{2,2}v_{1,3}, v_{1,2}v_{3,3} \}$ is a matching of size $2$ in $L_S(e_2, e_3)$.
In summary, there exists a matching $M$ of size $i+1$ in $L_S(e_{3-i}, \dots , e_3)$ that is vertex-disjoint from $v_{2,3}$ for some $i \in \{1,2\}$.
By a similar argument, there exists a matching $M'$ of size $i'+1$ in $L_S(e_{4}, \dots , e_{4+i'})$ that is vertex-disjoint from $v_{1,4}$ for some $i' \in \{1,2\}$.
Therefore, $M \cup M' \cup \{v_{2,3}v_{1,4}\}$ is a matching of size $i+i'+3$ in $L_S( e_{3-i}, \dots, e_{4+i'} )$.
Moreover, Fact~\ref{fact:extensible} implies that $P$ is $5$-extensible.
\end{proof}


Recall that our aim is to find a large matching in $3$-partite $3$-graphs $H$.
For the rest of this section, we assume that $M$ is a matching in $H$ of maximal size $m$.
Let $X_i = V_i \setminus V(M)$ for $i \in [3]$ and $x = |X_1|$. 
Set $X= X_1 \times X_2 \times X_3$ and so $|X| = x^3$. 
Note that if $S \in X$, then $S$ is a triple $(x_1,x_2,x_3)$ with $x_i \in X_i$.
Let $G_S(M)$ be the matching graph defined above. 
Given $S \in X$ and $(a_1,a_2,a_3)$ with $0 \le a_1, a_2, a_3 \le 2$, define $G_S(a_1,a_2,a_3)$ to be the subgraph of $G_S$ induced by edges of colour $(a_1,a_2,a_3)$ and let $e_S(a_1,a_2,a_3) = |E(G_S(a_1,a_2,a_3))|$.

Given $\gamma >0$, define $G_S'(a_1,a_2,a_3)$ to be a subgraph of $G_S(a_1,a_2,a_3)$ such that the minimum degree $\delta(G_S'(a_1,a_2,a_3)) \ge \gamma  m/16$, if it exists.
Let $e_S'(a_1,a_2,a_3) = |E(G'_S(a_1,a_2,a_3))|$.
We further assume that $G_S'(a_1,a_2,a_3)$ is chosen such that $e_S'(a_1,a_2,a_3)$ is maximal.
Note that $G_S'(a_1,a_2,a_3)$ is a function of $\gamma$ and the value of $\gamma$ will always be known from the context. 
We need the following simple fact for graphs.
It is easily proved by consecutively removing a vertex $v$ with degree less than $\varepsilon n$.
Thus, its proof is omitted.
\begin{prp} \label{prp:subgraph}
Let $G$ be a graph of order $n$.
If $e(G) > 2 \varepsilon \binom{n}2$, then there exists an induced subgraph $G' \subseteq G$ with $\delta(G') \ge \varepsilon n$ and $e(G - G') < 2 \varepsilon \binom{n}2$.
\end{prp}

Proposition~\ref{prp:subgraph} implies that
\begin{align}
e_S(a_1,a_2,a_3) -  e_S'(a_1,a_2,a_3) \le \frac{\gamma}8  \binom{m}2 . \label{eqn:G'}
\end{align}
Let $G_S'(\cdot,\cdot,0)$ be the union of $G_S'(2,2,0)$, $G_S'(2,1,0)$ and $ G_S'(1,2,0)$ and let $e_S'(\cdot,\cdot,0) = |E(G_S'(\cdot,\cdot,0))|$.
Hence, for all $S \in X$, 
\begin{align}
e_S'(\cdot,\cdot,0) \ge e_S(2,2,0) + e_S(2,1,0)+ e_S(1,2,0) - \frac{3\gamma}8\binom{m}2. \label{eqn:e_S'(,,0)}
\end{align}
Define $G_S'(\cdot,0,\cdot)$, $G_S'(0,\cdot,\cdot)$, $e_S'(\cdot,0,\cdot)$ and $e_S'(0,\cdot,\cdot)$ similarly.

As mentioned at the beginning of this section, we would like to find a large matching of size at least $(1- \rho)n$ in $H$. 
Suppose $M$ is a matching in $H$ of maximum size with $m=|M|  < (1-\rho) n $.
Define $X_1,X_2,X_3$ and $X$ as above. 
Assume that we are in the ideal case that given $e,e' \in M$, $L_S(e,e') = L_{S'}(e,e')$ for $S, S' \in X$.
Thus, the matching graph $G_S$ is the same for all $S \in X$.
Since $\delta_1(H)$ is large, we are able to deduce some information on the distribution of colours in~$G_S$.
For example, $G_S$ is not monochromatic in colour $(0,0,0)$.
Most importantly, there exists an edge-coloured path $P$ in $G_S$ isomorphic to one of those listed in Lemma~\ref{lma:forbiddencolouring}.
Since $P$ is $p$-extensible for $p \le 5 \le |X_i|$, we can enlarge $M$ contradicting the maximality of~$M$.

In the next lemma, we study the colour distribution of $G_S$ for all $S \in X$.
We would also like to point out that there is no assumption on $\delta_1(H)$ in the hypothesis of the lemma.

\begin{lma} \label{lma:app1}
Let $0 <  \rho ,  \gamma < 10^{-6}$ and let $n$ be a sufficiently large integer.
Suppose that $H$ is a $3$-partite $3$-graph with partition classes $V_1,V_2,V_3$ each of size~$n$.
Let $M$ be a matching in $H$ of maximal size $m = |M| = (1- \rho)n$. 
Set $X_i = V_i \setminus V(M)$ for $i \in [3]$ and $X= X_1 \times X_2 \times X_3$.
Let $x = \rho n = |X_1| = |X_2| = |X_3|$.
Then the following statements hold:
\begin{itemize}
	\item[(i)] For all $S \in X$ and all $a_1 + a_2 + a_3 \ge 5$, $e_S(a_1,a_2,a_3) = 0$.
	\item[(ii)] For all but at most $3x^3/8$ sets $S \in X$, we have 
\begin{align}
e_S(2,1,1), e_S(1,2,1), e_S(1,1,2) < \frac{\gamma}2  \binom{m}2. \label{eqn:e(1,2,1)small}
\end{align}
	\item[(iii)] There are less than $x^3/24$ sets $S \in X$ such that 
\begin{align*}
|V(G_S'(0,\cdot,\cdot)) \cap V(G_S'(\cdot,0, \cdot))| \ge  \gamma m /8.
\end{align*}
	Moreover, similar statements hold for $|V(G_S'(0,\cdot,\cdot)) \cap V(G_S'(\cdot, \cdot,0))|$ and $ |V(G_S'(\cdot, 0 , \cdot)) \cap V(G_S'(\cdot, \cdot,0))|$. 
	\item[(iv)] There are less than $x^3/8$ sets $S \in X$ such that each of $V(G_S'(0,\cdot,\cdot))$, $V(G_S'(\cdot,0,\cdot))$ and $V(G_S'(\cdot,\cdot,0))$ has size at least $\left(1/3 +{\gamma}/8 \right)m$.
	\item[(v)] Let $S =(x_1,x_2,x_3)\in X$ such that $S$ satisfies \eqref{eqn:e(1,2,1)small} and $e(L_{x_i}[M]) \ge (1+ \alpha ) \binom{m}2$ for $i \in [3]$ with $\alpha >0$.
Then 
\begin{align*}
e_S'(\cdot,\cdot,0), e_S'(\cdot,0,\cdot), e_S'(0,\cdot,\cdot) \ge \left( \alpha - {7\gamma}/{8}  \right) \binom{m}2.
\end{align*}
	\item[(vi)] There are less than $x^3/8$ sets $S = (x_1,x_2,x_3) \in X$ such that $S$ satisfies \eqref{eqn:e(1,2,1)small} and $e(L_{x_i}[M] ) \ge (10/9 + \gamma ) \binom{m}2$ for all $i \in [3]$.
\end{itemize}
\end{lma}

\begin{proof}
Note that an edge $(e_1,e_2)$ with colour $(a_1,a_2,a_3)$ with $a_1 + a_2 + a_3 \ge 5$ is $1$-extensible by Lemma~\ref{lma:forbiddencolouring}(i).
If such an edge exists in $G_S(M)$ for some $S \in X$, then Definition~\ref{dfn:ext} implies that there exists a matching $M'$ of size~$3$ on $V(e_1) \cup V(e_2) \cup V(S)$.
Hence, we can enlarge $M$ by replacing $\{e_1,e_2 \}$ with $M'$.
This contradicts the maximality of $M$ and so (i) holds.

Suppose that (ii) is false, so without loss of generality there exists a subset $X' \subseteq X$ of size at least $x^3/8$ such that $e_S(1,2,1) \ge \frac{\gamma}2  \binom{m}2$ for every $S \in X'$.
Pick $S \in X'$.
By Proposition~\ref{prp:subgraph}, the subgraph $G'_S(1,2,1)$ exists with $|V(G'_S(1,2,1))| \ge \gamma m/16$.
Thus, for each $S \in X'$ there are at least $(\gamma m /32)^6$ copies of a path of length $5$ in~$G' \subseteq G_S(1,2,1)$.
Then by an averaging argument, there exists a copy $P_0 = e_1e_2e_3e_4e_5e_6$ of a path of length $5$ such that $P_0 \subseteq G_S(1,2,1)$ for at least $2^{-33} \gamma^6 x^3$ sets $S \in X'$.
Note that if $(e'_1,e'_2)$ is coloured $(1,2,1)$ in $G_S$, then there are $4$ possible structures for $L_S(e'_1,e'_2)$.
Thus, there are at least $2^{-43} \gamma^6 x^3 > 9x^2$ sets $S \in X$ such that $L_S(e_1,e_2,e_3,e_4,e_5,e_6)$ is identical, where we recall that $x = \rho n $ and $n$ is sufficiently large.
Moreover, there exist four such sets $S_1, \dots, S_4 \in X$ that are vertex-disjoint. 
Since a monochromatic path of length 5 with colour $(1,2,1)$ is $4$-extensible by Lemma~\ref{lma:forbiddencolouring}(ii), there exists a matching $M'$ of size~$7$ in $H[\bigcup_{i \in [6]} V(e_i) \cup \bigcup_{j \in [4] } V(S_j)]$ by Definition~\ref{dfn:ext}.
Hence, $(M - \{ e_1, \dots, e_6\} )\cup M' $ is a matching of size $m+1$, contradicting the maximality of $M$.
Thus (ii) holds.

To prove (iii), suppose that there are at least $x^3/24$ sets $S \in X$ such that $|V(G_S'(0,\cdot,\cdot)) \cap V(G_S'(\cdot,0, \cdot))| \ge  \gamma m /8$.
We will only consider the cases when there are at least $x^3/216$ sets $S \in X$ such that one of the following holds:
\begin{itemize}
\item[(a)] $|V(G_S'(0,2,1)) \cap V(G_S'(2,0,1))| \ge  \gamma m /72$ for all such $S$,
\item[(b)] $|V(G_S'(0,1,2)) \cap V(G_S'(1,0,2))| \ge  \gamma m /72$ for all such $S$.
\end{itemize}
The arguments for the other cases are similar.

First suppose that (a) holds, that is, there are at least $x^3/216$ sets $S \in X$ with $|V(G_S'(0,2,1)) \cap V(G_S'(2,0,1))| \ge \gamma m/72$.
Recall that $\delta(G_S'(0,2,1))$, $\delta( G_S'(2,0,1)) \ge \gamma m/16$.
Thus, for each such $S$ there are at least 
$$ \frac{\gamma  m}{72} 
\frac{\gamma  m}{16}  \left(\frac{\gamma  m}{16}  -1\right) 
\left(\frac{\gamma  m}{16}  -2\right) 
\left(\frac{\gamma  m}{16}  -3\right) 
\ge \frac19 \left(\frac{\gamma m }{16}\right)^5$$
copies of 4-paths $P_0 = e_1 \dots e_5$ in $G_S$ such that $e_i e_{i+1}$ has colour $(0,2,1)$ for $i \in [3]$ and $e_4 e_{5}$ has coloured $(2,0,1)$.
By a similar averaging argument used in the proof of~(ii), there exist edges $e_1,e_2,\dots, e_5 \in M$ and vertex-disjoint $S_1, S_2,S_3 \in X$ such that, for each $i \in [3]$, $e_1 \dots e_5$ is isomorphic to the 4-path $P_0$ in $G_{S_i}$ and $L_{S_i}(e_1,\dots, e_{5})$ is identical.
Since $P_0$ is $3$-extensible by Lemma~\ref{lma:forbiddencolouring}(iii), there exists a matching $M'$ of size~$6$ in $H[\bigcup_{i \in [5]} V(e_i) \cup \bigcup_{j \in [3] } V(S_j)]$ by Definition~\ref{dfn:ext}.
Hence, $(M -\{ e_1, \dots, e_5\}) \cup M'$ is a matching of size $m+1$, contradicting the maximality of $M$.

Next, suppose that (b) holds.
Let $P_0' = v_1 \dots v_6$ be an edge-coloured path of length $5$ such that $v_i v_{i+1}$ is coloured $(0,2,1)$ for $i \in [3]$ and $(2,0,1)$ for $i \in \{4,5\}$.
By a similar averaging argument used above, there exist edges $e_1,e_2,\dots, e_6 \in M$ and vertex-disjoint $S_1, \dots ,S_5 \in X$ such that, for each $i \in [5]$, $e_1 \dots e_6$ is isomorphic to the path $P_0'$ in $G_{S_i}$ and $L_{S_i}(e_1,\dots, e_{6})$ is identical.
Since $P_0'$ is $5$-extensible by Lemma~\ref{lma:forbiddencolouring}(iv), there exists a matching $M'$ of size~$7$ in $H[\bigcup_{i \in [6]} V(e_i) \cup \bigcup_{j \in [5] } V(S_j)]$ by Definition~\ref{dfn:ext}.
Hence, $(M -\{ e_1, \dots, e_6\}) \cup M'$ is a matching of size $m+1$, contradicting the maximality of $M$.
Thus (iii) holds, which implies (iv).

To prove (v), define $G_S(a_1,\ast,\ast)$ to be the subgraph $\bigcup_{0 \le b_2,b_3 \le 2} G_S(a_1,b_2,b_3)$ and write $e_S(a_1,\ast,\ast)$ for $|E(G_S(a_1,\ast,\ast))|$.
Similarly, define $G_S(\ast,a_2,\ast)$, $G_S(\ast,\ast,a_3)$, $e_S(\ast,a_2,\ast)$ and $e_S(\ast,\ast,a_3)$.
Recall that for a vertex $x$, $L_{x}[M] = \bigcup_{\{e_i,e_j\} \in \binom{M}{2}} L_{x}[V(e_i), V(e_{j})] $.
So
\begin{align}
	e(L_{x_1}[M])
& =  2 e_S(2, \ast, \ast) + e_S(1, \ast, \ast) 
=e_S(2, \ast, \ast) + \binom{m}2 - e_S(0, \ast, \ast)\nonumber  \\
	& \le e_S(2, \ast, \ast) + \binom{m}2 -e_S'(0,\cdot,\cdot)- e_S(0,2,0). \nonumber 
\end{align}
By our assumption on $e(L_{x_1}[M])$, the inequality above implies that 
\begin{align}
	e_S(2, \ast, \ast)  \ge & \alpha \binom{m}2 + e_S'(0,\cdot,\cdot) + e_S(0,2,0)\label{eqn:L_x(M)2}.
\end{align}
On the other hand, by \eqref{eqn:e(1,2,1)small} and \eqref{eqn:e_S'(,,0)} we have 
\begin{align}
	e_S(2, \ast, \ast)  & \le e_S'(\cdot,\cdot,0) + e_S'(\cdot,0,\cdot) + e_S(2,1,1)+e_S(2,0,0) +  \frac{3\gamma}{4}  \binom{m}2 \nonumber \\
	& \le  e_S'(\cdot,\cdot,0) + e_S'(\cdot,0,\cdot) + e_S(2,0,0) + \frac{7\gamma}{8}  \binom{m}2 . 
	\nonumber
\end{align} 
Thus, together with \eqref{eqn:L_x(M)2}, we have 
\begin{align}
  e_S'(\cdot,\cdot,0) + e_S'(\cdot,0,\cdot) + e_S(2,0,0)  \ge \left( \alpha - \frac{7\gamma}{8}  \right) \binom{m}2 + e_S'(0,\cdot,\cdot) + e_S(0,2,0),
\nonumber
\end{align}
and by a similar argument (i.e. swapping the indices) we have
\begin{align}
 e_S'(\cdot,\cdot,0) + e_S'(0,\cdot,\cdot) + e_S(0,2,0) \ge \left( \alpha - \frac{7\gamma}{8}   \right) \binom{m}2 + e_S'(\cdot,0,\cdot) + e_S(2,0,0) .
\nonumber
\end{align}
Therefore, by combining the last two inequalities, we obtain
\begin{align}
e_S'(\cdot,\cdot,0) \ge \left( \alpha - {7\gamma}/{8}  \right) \binom{m}2 . \nonumber
\end{align}
We obtain similar statements for $e_S'(\cdot,0,\cdot)$ and $e_S'(0,\cdot,\cdot)$.
Hence, (v) holds.

Finally suppose that (vi) is false.
Thus there are at least $x^3/8$ sets $S = (x_1,x_2,x_3) \in X$ such that $S$ satisfies \eqref{eqn:e(1,2,1)small} and $e(L_{x_i}[M]) \ge (10/9 + \gamma ) \binom{m}2$ for all $i \in [3]$.
For each such $S$, (v) implies that $e_S'(\cdot,\cdot,0) \ge \left( 1/9 + {\gamma}/{8}  \right) \binom{m}2$.
Hence, $G'_S(\cdot,\cdot,0)$ spans at least $(1/3 +\gamma/8)m$ vertices in $G_S(M)$, and similar statements hold for 
$G'_S(\cdot,0, \cdot)$ and $G'_S(0,\cdot,\cdot)$.
However this contradicts~(iv).
\end{proof}

Here, we prove Theorem~\ref{thm:asymptotic} implying that $m'_1(3,n) \sim 5n^2 /9$ for large~$n$.

\begin{proof}[Proof of Theorem~\ref{thm:asymptotic}]
Fix $0 < \gamma <10^{-6}$ and assume that $n_0$ is sufficiently large..
Let $H$ be a $3$-partite $3$-graph with $n> n_0$ vertices in each class and $\delta_1(H) \ge \left( 5/9+ \gamma  \right) n^2$.
Note that $\gamma$ satisfies the hypothesis of Lemma~\ref{lma:absorptionlemma} with $k = 3$ and $l=1$.
Let $M$ be the matching given by Lemma~\ref{lma:absorptionlemma} and so $|M| \le 2 \gamma^3 n$.
Let $H' = H \setminus V(M)$ be the $3$-partite $3$-graph with partition classes $V'_1,V'_2,V'_3$ each of size $n'$, where $V'_i = V_i \setminus V(M)$ and  $n' = n - |M|$.
Note that 
\begin{align}
	\delta_1(H') \ge \left( 5/9 + \gamma  \right) n^2 - 4 \gamma^3 n^2 
\ge \left( 5/9 + \gamma/2  \right) n'^2. \nonumber
\end{align}
Let $M_1$ be a largest matching in $H'$ of size~$m$.
Next we claim that $m \ge (1-6 \gamma^6)n'$.
Suppose the contrary, so $m < (1-6 \gamma^6)n'$.
Let $X_i = V'_i \setminus V(M_1)$ for $i \in [3]$ and $X = X_1 \times X_2 \times X_3$.
Without loss of generality, we may suppose that $x = |X_1| = |X_2| = |X_3| =  \rho n$ (by assigning additional balanced $3$-sets to $M$ and omiting floors and ceilings for clarity of presentation).
Given vertex $x \notin V(M)$, the number of $2$-sets $T \subseteq V$ such that $T \notin L_x[M]$, is at most $2x m + m \le 2 \rho  n'^2 \le \gamma n'^2/2$.
Hence, by the assumption on $\delta_1(H')$, for every $x \in X$,
\begin{align}
	e(L_x[M] )\ge  \deg_{H'} (x) - {\gamma  n'^2}/2 
\ge \left( 5/9 + {\gamma }/2 \right)n'^2 
\ge \left( {10/}9 + \gamma  \right) \binom{m}2 \nonumber.
\end{align}
However, this implies a contradiction by  Lemma~\ref{lma:app1}(ii) and~(iv).
Therefore, $m \ge (1-6 \gamma^6)n'$ as claimed.

Let $W = V(H') \setminus V(M_1)$.
Note that $W$ is balanced with at most $6 \gamma^6 n' < 6 \gamma^6 n$ vertices.
Thus, there exists a matching $M_2$ covering exactly the vertices of $V(M) \cup W$ by the property of $M$, and so $M_1 \cup M_2$ is a perfect matching in~$H$.
\end{proof}


\subsection{Proof of Lemma~\ref{lma:exact}} \label{sec:epsilon-close}
Lemma~\ref{lma:app1} provides us with some information on the matching graph $G_S(M)$ when the matching $M$ is not large enough. 
To prove Lemma~\ref{lma:exact}, we analyze the structure of $G_S(M)$ further.
The proof involves a series of claims, which gives the structure of $H$.

\begin{proof}[Proof of Lemma~\ref{lma:exact}]
Without loss of generality, we may assume that $n_0$ is sufficiently large.
Let $V_1$, $V_2$ and $V_3$ be the partition classes of $H$ and let $M$ be a matching in $H$ with maximal size.
Let $m = |M|$, so $n - m \ge  \rho n$.
By assigning additional balanced $3$-sets to $M$ and omitting floors and ceilings for clarity of presentation, we may assume that $m = (1- \rho)n$.
Set $Y_i = V(M) \cap V_i$ and $Y = V(M)$.
Let $X_i = V_i \setminus Y_i$ for $i \in [3] $ and $x = |X_i| =  \rho n$.
For every $S \in X_1 \times X_2 \times X_3$, define $G_S$, $G_S'(a_1,a_2,a_3)$ and $G_S'(\cdot,\cdot,0)$ as in Section~\ref{sec:matchinggraphs}.
We abuse the notation by letting $X$ to mean both $X_1 \times X_2 \times X_3$ and $X_1 \cup X_2 \cup X_3$, but it will be clear from the context.

In the next claim, we bound the number of edges of type $XXY$ in~$H$, (where $X = X_1 \cup X_2 \cup X_3$).
Recall that for a vertex $v \in V(H)$ and disjoint vertex sets $U,U' \subseteq V(H)$, $L_v[U,U']$ is the bipartite subgraph of $L_v$ induced by the vertex classes $U$ and~$U'$.

\begin{clm} \label{clm: e(L_S[X,M])}
For all $i \in [3]$, all but at most $x/8$ vertices $x_i \in X_i$ satisfy $e(L_{x_i}[X,Y]) \le  (1 + \sqrt{\gamma})mx$.
\end{clm}

\begin{proof}
Suppose the claim is false for $i=1$ say.
Pick $x_1 \in X_1$ such that $e(L_{x_1}[X,Y])   > (1 + \sqrt{\gamma})mx$.
For an edge $e \in M$, we say that $e$ is \emph{good for} $x_1$ if each of $v_2 = V(e) \cap V_2$ and $v_3 = V(e) \cap V_3$ has degree at least $1$ in $L_{x_1}[V(e),X]$.
We claim that there are at least $2\sqrt{\gamma}m/3$ good edges for $x_1$.
Indeed this is true or else we have
\begin{align}
	e(L_{x_1}[X,Y] ) & <  x \cdot (1- 2\sqrt{\gamma}/3) m  + 2 x \cdot 2 \sqrt{\gamma} m/3 
 =  (1 + 2\sqrt{\gamma}/3) m x,\nonumber
\end{align}
a contradiction.
Since there are at least $x/8$ such $x_1 \in X_1$, by an averaging argument there exists an edge $e \in M$ that is good for distinct $x_1,x'_1\in X_1$.
Thus there exist $u \in X_{3}$ and $u' \in X_{2}$ such that $x_1 v_{2} u$ and $x_1' u' v_{3}$ are vertex-disjoint edges in~$H$, where $v_i = V(e) \cap V_i$.
Hence, we can enlarge $M$ (by replacing $e$ with $x_1 v_{2} u$ and $x_1' u' v_{3}$) contradicting the maximality of~$M$.
\end{proof}

Denote by $X'$ the set of $S = (x_1,x_2,x_3) \in X_1 \times X_2 \times X_3$ such that 
\begin{itemize}
	\item[(a)] $e_S(2,1,1), e_S(1,2,1), e_S(1,1,2) \le \frac{\gamma}2 \binom{m}2 $,
	\item[(b)] $e(L_{x_i}[X,Y]) \le  (1 + \sqrt{\gamma})mx$ for all $i \in[3]$,
	\item[(c)] $|V(G_S'(0,\cdot,\cdot)) \cap V(G_S'(\cdot,0, \cdot))| \le \gamma m /8$, and similar arguments hold, where we swap the indices.
\end{itemize}
Note that $|X'| \ge x^3/8$ by Lemma~\ref{lma:app1}(ii), (iii) and Claim~\ref{clm: e(L_S[X,M])}.
The following claim shows that $m = |M|  \ge (1-3\sqrt{\gamma/2})n$.
Recall that $L_{x}[M] = \bigcup_{\{e_i,e_j\} \in \binom{M}{2}} L_{x}[V(e_i), V(e_{j})] $ for $x \in X$.

\begin{clm} \label{clm:L_x(M)}
For each $S =(x_1,x_2,x_3) \in X'$ and all $i \in [3]$, $e(L_{x_i}[M])  \ge  \left({10}/9 - 2\gamma \right) \binom{m}2$.
Moreover, $m \ge (1-3\sqrt{\gamma/2})n$.
\end{clm}

\begin{proof}
Pick $i \in[3]$.
Note that there are at most $m$ sets $T \in \binom{V}{2}$ such that $\{x_i\} \cup T$ is an edge and $T$ is contained in an edge $e \in M$.
By the maximality of $M$, every edge containing $x_i$ must intersect~$M$.
Recall that $n = m+x$ and $\sqrt{\gamma}x = \sqrt{\gamma} \rho n \ge 1$ as $n$ is large, so
\begin{align}
	e(L_{x_i}[M]) & \ge  \deg_{H}(x_i)- e(L_{x_i}[X,Y]) - m \nonumber \\
	& \ge   (5/9 - \gamma)n^2 -(1+ 2\sqrt{\gamma} ) m x \nonumber \\
	& = 	(5/9 - \gamma)(m^2+x^2) + (1/9 - 2 \gamma - 2\sqrt{\gamma})mx \nonumber \\
	& \ge (5/9 - \gamma)(m^2+x^2) \label{eqn:L_x(M)3:exact}.
\end{align}
Hence $e(L_{x_i}[M])  \ge  \left({10}/9 - 2\gamma \right) \binom{m}2$.
Now suppose that $m < (1-3\sqrt{\gamma/2})n$ and so $x > 3\sqrt{\gamma/2} n \ge 3\sqrt{\gamma/2} m$.
Hence, \eqref{eqn:L_x(M)3:exact} becomes
\begin{align}
	e(L_{x_i}[M]) & \ge  (5/9 - \gamma)\left(1+ {9\gamma}/2 \right)m^2 
	\ge (10/9 + \gamma) \binom{m}2 \nonumber
\end{align}
for all $i \in [3]$ and $S =(x_1,x_2,x_3)\in X'$.
Since (a) implies \eqref{eqn:e(1,2,1)small}, this contradicts Lemma~\ref{lma:app1}(vi) as $|X'| \ge x^3/8$.
This completes the proof of the claim.
\end{proof}

In the next claim, we show that for each $S \in X'$ almost all edges in $G_S$ have colours $(2,2,0)$, $(2,0,2)$, $(0,2,2)$ or $(1,1,1)$.

\begin{clm} \label{clm:edgedistribution}
For each $S \in X'$, $e_S'(2,2,0), e_S'(2,0,2), e_S'(0,2,2) \ge  \left(1/9 - 23\gamma  \right)\binom{m}2$ and $e_S(1,1,1) \ge  \left(2/3 - 55\gamma  \right)\binom{m}2$.
In particular, there are at most $125 \gamma \binom{m}2$ edges in $G_S(M)$ not coloured by $(2,2,0)$, $(2,0,2)$, $(0,2,2)$ or $(1,1,1)$.
\end{clm}

\begin{proof}
By Claim~\ref{clm:L_x(M)} and Lemma~\ref{lma:app1}(v) taking $\alpha = 1/9-2 \gamma$, we have 
\begin{align*}
	e'_S(\cdot,\cdot,0), e'_S(\cdot,0,\cdot), e'_S(0,\cdot,\cdot) \ge  \left( 1/9 - 3 \gamma  \right) \binom{m}2.
\end{align*}
Hence, $|V(G_S'(\cdot,\cdot,0))|, |V(G_S'(\cdot,0,\cdot))|, |V(G_S'(0,\cdot,\cdot))| \ge (1/3 - 5 \gamma) m $. 
Together with~(c) we can bound $|V(G_S'(\cdot,\cdot,0))|$, $|V(G_S'(\cdot,0,\cdot))|$, $|V(G_S'(0,\cdot,\cdot))|$ from above by $(1/3 + 11 \gamma) m$.
Moreover,
\begin{align}
 e_S'(2,2,0) \le 	e_S'(\cdot,\cdot,0) \le \left(\frac{1}9 + \frac{15 \gamma}{2} \right) \binom{m}2. \label{eqn:boundone(,,2)}
\end{align}
Denote by $q_S(j)$ the number of edges $(e_1,e_2)$ in $G_S$ such that $(e_1,e_2)$ is coloured $(a_1,a_2,a_3)$ with $a_1+a_2+a_3 = j$.
Clearly, $\sum_{j}q_S(j) =  \binom{m}2$.
Lemma~\ref{lma:app1}(i) implies that $q_S(j) = 0$ for $j \ge 5$.
In addition, by property (a) and \eqref{eqn:G'} we have 
\begin{align}
	q_S(4)& =  e_S(2,2,0) + e_S(2,0,2) +e_S(0,2,2) \nonumber \\&+ e_S(2,1,1) +e_S(1,2,1)+e_S(1,2,1) \nonumber \\
	 & \le   e_S'(2,2,0) + e_S'(2,0,2) +e_S'(0,2,2) + 2\gamma \binom{m}2. \nonumber
\end{align}
Therefore,
\begin{align}
 	e(L_S[M]) & =  \sum_{j \le 4} j q_S(j) \le 3\binom{m}2 + q_S(4) - \sum_{j \le 2}q_S(j) \nonumber \\
	& \le   e_S'(2,2,0) + e_S'(2,0,2) +e_S'(0,2,2) + \left(3+ 2 \gamma  \right) \binom{m}2- \sum_{j \le 2}q_S(j). \nonumber 
\end{align}
Recall that $L_S[M] = L_{x_1}[M] \cup L_{x_2}[M] \cup L_{x_3}[M]$, where $S = (x_1,x_2,x_3)$.
Hence, Claim~\ref{clm:L_x(M)} implies that 
\begin{align}
	e_S'(2,2,0) + e_S'(2,0,2) +e_S'(0,2,2)  \ge \left(\frac13 - 8 \gamma  \right)\binom{m}2 + \sum_{j \le 2}q_S(j).\label{eqn:6.41}
\end{align}
Together with~\eqref{eqn:boundone(,,2)}, we deduce that each of $e_S'(2,2,0)$, $e_S'(2,0,2)$ and $ e_S'(0,2,2)$ is at least $\left( 1/9- 23\gamma  \right)\binom{m}2$.
Moreover \eqref{eqn:6.41} becomes $ \sum_{j \le 2} q_S(j) \le  31 \gamma \binom{m}2$.
By \eqref{eqn:e_S'(,,0)} and \eqref{eqn:boundone(,,2)}, we have 
\begin{align*}
	\binom{m}2 & = e(G_S) =  \sum_{j \le 2} q_S(j) + q_S(3) + q_S(4) \\
	& \le 31 \gamma \binom{m}2 + e_S(1,1,1) + e_S'(\cdot,\cdot,0) + e_S'(\cdot,0,\cdot )+ e_S'(0,\cdot,\cdot ) + \frac{9\gamma}8 \binom{m}2 \\
	& \le  \left(1/3+ 55\gamma  \right)\binom{m}2 + e_S(1,1,1),
\end{align*}
where the last inequality is due to~\eqref{eqn:boundone(,,2)}.
Thus, $e_S(1,1,1) \ge  \left(2/3 - 55\gamma  \right)\binom{m}2$ as required.
\end{proof}

Next we show that there exist two vertex-disjoint sets $S_1, S_2 \in X'$ such that the matching graphs $G_{S_1}$ and $G_{S_2}$ are virtually the same.

\begin{clm} \label{clm:S_1S_2}
There exist two vertex-disjoint sets $S_1, S_2 \in X'$ such that $G_{S_1}$ and $G_{S_2}$ have at most $750 \gamma \binom{m}2$ edges that are coloured differently or not coloured by $(2,2,0)$, $(2,0,2)$, $(0,2,2)$ nor $(1,1,1)$.
\end{clm}

\begin{proof}
Recall that $|X'| \ge x^3/8$.
By~\cite{MR0183654} (taking $X'$ to be the edge set on the vertex set $X_1 \cup X_2 \cup X_3$), we know that there exist distinct $x_i,x'_i \in X_i$ for each $i\in[3]$ such that $ \{x_1,x'_1\} \times \{x_2,x'_2\} \times \{x_3,x'_3\} \subseteq X'$.
Set $S_1 = (x_1,x_2,x_3)$ and $S_2 = (x'_1,x'_2,x'_3)$.
To prove the claim, it is sufficient to show that if $S, S' \in X'$ differ only in one vertex, then there are at most $250 \gamma \binom{m}2$ edges that are coloured differently in $G_{S}$ and $G_{S'}$ or not coloured by $(2,2,0)$, $(2,0,2)$, $(0,2,2)$ nor $(1,1,1)$.
Without loss of generality, we may assume that $S=(x_1,x_2,x_3)$ and $S'=(x_1',x_2,x_3)$.
Note that for distinct $e_1,e_2 \in M$, if $(e_1,e_2)$ is of type $(a_1,a_2,a_3)$ with respect to $S$, then $(e_1,e_2)$ is of type $(a_1',a_2,a_3)$ with respect to $S'$.
Thus, if $(e_1,e_2)$ is coloured differently in $G_{S}$ and $G_{S'}$, then $(e_1,e_2)$ is not coloured by $(2,2,0)$, $(2,0,2)$, $(0,2,2)$, $(1,1,1)$ in $G_{S}$ or $G_{S'}$.
By Claim~\ref{clm:edgedistribution}, all but at most $125 \gamma  \binom{m}2$ edges are coloured by $(2,2,0)$, $(2,0,2)$, $(0,2,2)$, $(1,1,1)$ for $G_{S}$ and similarly for $G_{S'}$.
So the claim follows.
\end{proof}

Fix $S_1 = (x_1,x_2,x_3)$ and $S_2 =(x_1',x_2',x_3')$ in $ X'$ satisfying Claim~\ref{clm:S_1S_2} for the rest of the proof.
Let $G$ be the edge-coloured subgraph of a complete graph with vertex set $M$ induced by the edges that are coloured the same in $G_{S_1}$ and $G_{S_2}$ with colours in $\{(1,1,1)$, $(2,2,0)$, $(2,0,2)$, $(0,2,2)\}$.
By Claim~\ref{clm:S_1S_2}, $G$ has at least $(1-750\gamma)\binom{m}2$ edges.
By Proposition~\ref{prp:subgraph} and removing at most $8 \gamma \binom{m}2$ additional edges in $G$, we may further assume that the subgraph induced by edges of one colour (after removing the isolated vertices) has minimum degree at least $\gamma m$.
Call the resulting subgraph $G'$ and note that $e(G') \ge (1- 758 \gamma) \binom{m}2$.
Moreover, by Claim~\ref{clm:edgedistribution} we have
\begin{align}
	e(G'(2,2,0)), e(G'(2,0,2)), e(G'(0,2,2)) & \ge  \left(1/9- 790\gamma  \right)\binom{m}2, \label{eqn:e(G'(2,2,0)}
\end{align}	
where $G'(a_1,a_2,a_3)$ is the monochromatic subgraph of $G'$ induced by the edges of colour $(a_1,a_2,a_3)$ with all isolated vertices removed.

Define $M_1$ to be the set of edges $e \in M$ such that $e$ is an vertex in $V(G'(0,2,2))$.
Similarly define $M_2$ to be the set of edges $ e \in V(G'(2,0,2))$ and $M_3$ to be the set of edges $ e \in V(G'(2,2,0))$.
In the next claim, we study the induced subgraph of $G$ induced on the vertex set $M_1 \cup M_2 \cup M_3$.
Recall that $\{v_{i,j} \}= V_i \cap V(e_j)$ for $e_j \in M$.

\begin{clm} \label{clm:exactconfig}
\begin{itemize}
\item[(a)]$|M_1|$, $|M_2|$, $|M_3| \ge \left(1/3- 1200\gamma  \right) m$.
  \item[(b)] $M_1$, $M_2$ and $M_3$ are pairwise disjoint.
 \item[(c)] Let $e_1 \in M_{i}$ and $e_2 \in M_{i'}$ with $i \ne i'$.
Then each edge of $L_{S_1}(e_1,e_2) \cup L_{S_2}(e_1,e_2)$ is incident with $v_{i,1}$ or $v_{i',2}$.
	Moreover if $e_1e_2 \in E(G')$, then $e_1e_2$ is coloured $(1,1,1)$ and $L_{S_1}(e_1,e_2) = L_{S_2}(e_1,e_2)$.
	\item[(d)]Let $e_1,e_2 \in M_{1}$.
	If $e_1e_2 \in E(G')$, then $e_1e_2$ is coloured $(0,2,2)$.
Similar statement holds for $M_2$ and $M_3$.
	\item[(e)] If $e_1,e_2 \in M_1 \cup M_2 \cup M_3$ and $e_1e_2 \in E(G')$, then $L_{S_1}(e_1,e_2) = L_{S_2}(e_1,e_2)$.
\end{itemize}

\end{clm}

\begin{proof}
Let $S_1 = (x_1,x_2,x_3)$ and $S'_2 = (x'_1,x'_2,x'_3)$.
Since $\gamma < 10^{-6}$, (a) holds by~\eqref{eqn:e(G'(2,2,0)}.


To prove~(b), suppose that $M_1 \cap M_2 \ne \emptyset$.
By the minimum degree of $G'(0,2,2)$ and $G'(2,0,2)$, there exists a path $e_1e_2e_3e_4$ in~$G'$ such that $e_1e_2e_3$ is coloured $(0,2,2)$ and $e_3e_4$  is coloured $(2,0,2)$.
Therefore, there exists a matching 
\begin{align*}
M' = \{v_{1,1}v_{2,2}, v_{1,2} v_{2,1}, v_{3,2} v_{1,3} , v_{2,3} v_{3,4}, v_{3,3} v_{2,4}  \}
\end{align*}
of size 5 in $L_{S_i}(e_1, e_2,e_3, e_4)$ for $i = 1,2$, see Figure~\ref{fig:clm:exactconfig}(A).
This implies that 
\begin{align*}
M'' = \{x_3 v_{1,1}v_{2,2}, x'_3v_{1,2} v_{2,1}, x_2v_{3,2} v_{1,3} , x_1 v_{2,3} v_{3,4}, x'_1 v_{3,3} v_{2,4}  \}
\end{align*}
is a matching of size $5$ on $H[V(\{e_1, \dots, e_4\} \cup S_1 \cup S_2)]$.
Thus, $M \cup M'' - \{e_1, \dots, e_4\}$ is a matching of size $m+1$, contradicting the maximality of~$|M|$.
Therefore (b) follows.

To prove~(c), without loss of generality, we may assume that $i = 1$ and $i' =2$.
Let $e_0,e_3 \in M \setminus \{e_1,e_2\}$ such that $e_0 \ne e_3$ and $e_0e_1$ and $e_2e_3$ are of colours $(0,2,2)$ and $(2,0,2)$ respectively in $G'$.
Note that $e_0$ and $e_3$ exist by the minimum degree of $G'(0,2,2)$ and $G'(2,0,2)$.
Suppose (c) is false, so $v_{2,1} v_{3,2} \in  E(L_{S_1}(e_1,e_{2}))$ say.
Then there exists a matching 
\begin{align*}
M' = \{ v_{1,0} v_{3,1}, v_{3,0}v_{1,1}, v_{2,1} v_{3,2}, v_{1,2}v_{2,3}, v_{1,3} v_{2,2}\}
\end{align*}
of size $5 $ in $L_{S_1}(e_0,e_1,e_2,e_3)$, see Figure~\ref{fig:clm:exactconfig}(B).
Moreover, $M' -v_{2,1} v_{3,2}$ is a subgraph of $L_{S_2}(e_0,e_1,e_2,e_3)$.
Therefore
\begin{align*}
M'' = \{ x_2v_{1,0} v_{3,1}, x_2'v_{3,0}v_{1,1}, x_1v_{2,1} v_{3,2}, x_3 v_{1,2}v_{2,3}, x_3'v_{1,3} v_{2,2}\}
\end{align*}
is a matching of size $5$ on $H[V(\{e_0, \dots, e_3\} \cup S_1 \cup S_2)]$.
Thus, $M \cup M'' - \{e_0, \dots, e_3\}$ is a matching of size $m+1$, contradicting the maximality of~$M$.
Therefore (c) follows.

To prove (d), let $e_0,e_3 \in M_1 \setminus \{e_1,e_2\}$ such that $e_0 \ne e_3$ and $e_0e_1$ and $e_2e_3$ are both coloured $(0,2,2)$ in $G'$, which exist by the minimum degree of $G'(0,2,2)$.
Suppose that $e_1e_2$ is coloured by one of $\{(2,2,0)$, $(2,0,2)$, $(1,1,1)\}$.
Without loss of generality, we may assume that $v_{2,2}v_{3,1} \in E(L_{S_i}(e_1,e_2))$ for $i =1,2$.
Note that there exists a matching 
\begin{align*}
M' = \{v_{1,0} v_{2,1},  v_{2,0}v_{1,1}, v_{3,1}v_{2,2}, v_{1,2}v_{3,3}, v_{3,2}v_{1,3} \}
\end{align*}
of size $5$ in $L_{S_i}(e_0, e_1, e_2,e_3)$ for $i=1,2$, see Figure~\ref{fig:clm:exactconfig}(C).
By similar arguments used in (b) and (c), this implies a contradiction and so (d) holds.

Note that if $e_1e_2$ is coloured $(0,2,2)$ in $G'$ then $L_{S_1}(e_1,e_2) = L_{S_2}(e_1,e_2)$.
Therefore, in order to prove (e), it is enough to consider the case when $e_1e_2$ is coloured $(1,1,1)$ in $G'$.
Since $e_1,e_2 \in M_1 \cup M_2 \cup M_3$ and $c(e_1e_2) = (1,1,1)$, (d) implies that $e_1 \in M_{i}$ and $e_2 \in M_{i'}$ with $i \ne i'$.
Hence, $L_{S_1}(e_1,e_2) = L_{S_2}(e_1,e_2)$ by~(c).
\begin{figure}[tbp]
\centering
\subfloat[]{
\includegraphics[scale=0.6]{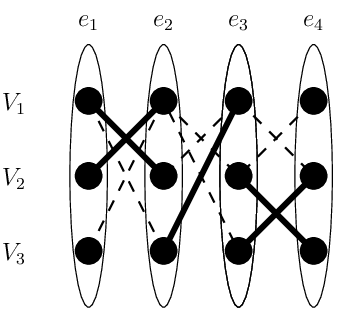}}
\subfloat[]{
\includegraphics[scale=0.6]{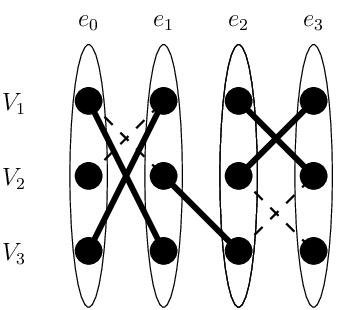}}
\subfloat[]{
\includegraphics[scale=0.6]{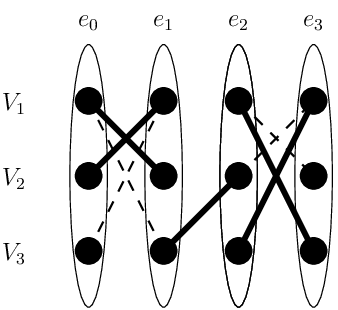}}
\caption{Diagrams for Claim~\ref{clm:exactconfig}}
\label{fig:clm:exactconfig}
\end{figure}
\end{proof}

Recall Claim~\ref{clm:L_x(M)} that $m \ge (1-3\sqrt{\gamma/2})n$.
For each $i \in[3]$, pick $M'_i \subseteq M_i$ of size exactly  $ \left(1/3- 1200\gamma  \right) m  \ge (1-\gamma'')n/3$.
Set $M' = \bigcup_{i \in [3]} M_i'$.
Now we define $H'$ to be the $3$-partite $3$-graph in $H$ induced by the vertex set $V(M')$.
Hence, each partition has size $n' = 3|M_i'| \ge (1 - \gamma'') n$.
Let $V'_i = V(H') \cap V_i$, $W_i = V(M'_i) \cap V_i'$ and $U_i = V'_i \setminus W_i$.
Set $V' = V(H')$, $W = \bigcup_{i \in [3]} W_i$ and $U= \bigcup_{i \in [3]} U_i$.
Note that $|W_i| = n'/3$ and $|U_i| = 2n'/3$.
Our aim is to show that $H'$ is $8 \gamma''$-close to $H'_3(n';n')$.
The next two claims show that the number of edges in $H'$ of type $UUU$ is small.

\begin{clm} \label{clm:Utriangle}
Let $\{e_1, e_2, e_3\}$ be a triangle in $G'$ with $e_{j} \in M'$.
Then $u_1u_2u_3$ is not an edge in $H$ for $u_j \in V(e_j) \cap U$ for $j \in [3]$.
\end{clm}

\begin{proof}
Suppose the contrary let $u_1u_2u_3$ be an edge in $H$ with $u_j \in V(e_j) \cap U$.
Without loss of generality, we may assume that $u_j = v_{j,j} = V(e_j) \cap  V'_{j}$. 
Further suppose that $e_j \in M_{i_j}$.
Since $V_{i_j} \cap V(e_j)  \in W_{i_j}$, we have $i_j \ne j$.
In order to obtain a contradiction, it is sufficient to show that there exists a matching of size 3 in $L_{S_1}(e_1,e_2) \cup L_{S_1}(e_2,e_3) \cup L_{S_1}(e_3,e_1)$ avoiding the vertices $u_1,u_2,u_3$, which then implies that we can enlarge $M$ together with $S_1$ and $S_2$.

First suppose that $i_1,i_2,i_3$ are distinct.
Without loss of generality we may assume that $e_1 \in M'_2$, $e_2 \in M'_3$ and $e_3 \in M'_1$.
Since $e_1e_2e_3$ forms a triangle in $G'$, by Claim~\ref{clm:exactconfig}(c) and~(e) we deduce that $(e_i,e_j)$ is coloured $(1,1,1)$ for $i \ne j$.
Moreover, we can determine the exact structure of $L_S(e_i,e_j)$ for $i \ne j$.
In particular, $\{v_{1,2} v_{2,1}$, $v_{1,3} v_{3,1}$, $v_{2,3} v_{3,2}\}$ is a matching of size 3 avoiding $u_1u_2u_3$.
Hence, we may assume that $e_1 \in M'_2$ and $e_2,e_3 \in M'_1$.
By Claim~\ref{clm:exactconfig}(c)--(e), we determine the exact structure of $L_S(e_i,e_j)$ for $i \ne j$.
Moreover, there is also a matching of size 3, namely $\{v_{1,2} v_{2,1}$, $v_{1,3} v_{3,1}$, $v_{2,3} v_{3,2}\}$.
\end{proof}

\begin{clm} \label{clm:u1U2U3}
The number of edges of type $UUU$ is at most $2 \gamma'' {n'}^3/3$.
\end{clm}

\begin{proof}
Given a vertex $u \in U$, let $e(u)$ be the edge in $M'$ containing $u$.
Since each edge in $M'$ is of type $UUW$, the number of edges $u_1u_2u_3$ of type $UUU$ with $e(u_i)$ not distinct is at most $n' \times 2n'/3 = 2 {n'}^2 /3$.
Let $u_1u_2u_3$ be an edge of type $UUU$ with $e(u_i)$ distinct.
By Claim~\ref{clm:Utriangle}, $\{e(u_1) ,e(u_2), e(u_3)\}$ does not form a triangle in $G'$.
Since $G'$ misses at most $758\gamma \binom{m}2$ edges, at most $127 \gamma m^3$ submatchings $\{e_j, e_{j'}, e_{j''}\} \subseteq M$ do not form a triangle in $G'$.
For each such submatching of size~3, there are at most $8$ ways of choosing $u_1, u_2$ and $u_3$.
Therefore, the number of edges of type $UUU$ in $H'$ is at most $2 {n'}^2 /3 + 8 \times 127 \gamma m^3 \le 2 \gamma'' {n'}^3/3$.
\end{proof}

Finally, we are ready to show that $H'$ is $8\gamma''$-close to $H'_3(n';n')$.
Denote by $e_{H'}[Q_1 Q_2 Q_3]$ the number of edges in $H'$ of type $Q_1 Q_2 Q_3$.
For $u_1 \in U_1$,
\begin{align}
	\deg_{H'}(u_1) & \ge  \delta_1(H) - 2 (n- n') n \ge \left( 5/9 - \gamma - 2 \gamma'' \right) n^2
	\ge \left( 5/9 - 3 \gamma'' \right) {n'}^2 \nonumber.
\end{align}
Hence, there are $( 10/27 - 2 \gamma'' ) {n'}^3$ edges of type $U_1V_2'V_3'$ as $|U_1| = 2n'/3$.
Recall Claim~\ref{clm:u1U2U3} that the number of edges of type $UUU$ is $e_{H'}[UUU] \le 2 \gamma'' {n'}^3/3$.
Therefore,
\begin{align}
	e_{H'}[U_1W_2W_3] +e_{H'}[U_1U_2W_3] + e_{H'}[U_1W_2U_3]& \ge 	(10/27 - 8  \gamma'' /3){n'}^3  \nonumber.
\end{align}
and similar inequalities hold when we swap the indices.
Note that $e_{H'}[UUW] \le 4 {n'}^3 /9$.
Thus,
\begin{align}
	3(10/27 - 8/3 \gamma''){n'}^3 & \le  e_{H'}[UWW] +2 e_{H'}[UUW] \nonumber \\
	&\le  e_{H'}[UWW] +e_{H'}[UUW] +4{n'}^3/9,\nonumber \\
	(2/3 - 8 \gamma''){n'}^3 & \le  e_{H'}[UWW] +e_{H'}[UUW]. \nonumber
\end{align}
Recall that each edge of $H'_3(n';n')$ is of type either $UUW$ or $UWW$.
This means that $e(H'_3(n';n')) = 2{n'}^3/3$ and so $H'$ is $8 \gamma''$-close to $H'_3(n';n')$.
This completes the proof of Lemma~\ref{lma:exact}.
\end{proof}

\section{Extremal Result}\label{sec:extremalresult}

Our aim of this section is to prove that if $H$ is $\varepsilon$-close to $H'_3(n;n)$ and $\delta_1(H) > d_3(n;n-1)$, then $H$ contains a perfect matching. 

\begin{lma} \label{lma:epsilon-close}
For all $0 < \varepsilon < 2^{-2}3^{-22}$, there exists an integer $n'_0$ such that the following holds.
Suppose $H$ is a $3$-partite $3$-graph with each class of size $n \ge n'_0$.
If  $H$ is $\varepsilon$-close to $H'_3(n;n)$ and $\delta_1(H) > d_3(n,n-1)$,
then $H$ contains a perfect matching.
\end{lma}

Our argument follows closely to K{\"u}hn, Osthus, and Treglown~\cite[Lemma~7]{kuhn2010matchings}.
Given a $3$-graph $H$ and a vertex $v \in V(H)$, we write $L^H_v$ for the link graph of $v$ with respect to $H$.
Given $\alpha>0$ and a $3$-partite $3$-graph $H $ on the same partition classes as $H'(n;d_1,d_2,d_3)$, we say a vertex $v \in V(H)$ is \emph{$\alpha$-good with respect to $H(n;d_1,d_2,d_3)$} if $|E(L_{v}^{H(n;d_1,d_2,d_3)} - L^{H}_v)| \le \alpha n^2$.
Otherwise $v$ is said to be \emph{$\alpha$-bad}.
Next we show that if all vertices of $H$ are $\alpha$-good with respect to $H'(n;d_1,d_2,d_3)$ with $d_1 +d_2+d_3 = n$, then $H$ contains a perfect matching.

\begin{lma} \label{lma:alpha-good}
Let $0 < \alpha < 2^{-8}$ and let $n$, $d_1$, $d_2$ and $d_3$ be integers such that $d_1, d_2, d_3 \ge 5n/16$ and $d_1+d_2+d_3 = n \ge 10$.
Suppose that $H$ is a $3$-partite $3$-graph on the same partition classes as $H'(n;d_1,d_2,d_3)$ and every vertex of $H$ is $\alpha$-good with respect to $H'(n;d_1,d_2,d_3)$.
Then $H$ contains a perfect matching.
\end{lma}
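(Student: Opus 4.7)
The plan is to construct a perfect matching of $H$ consisting entirely of edges of type $UUW$. First, since $d_1+d_2+d_3=n$, a double-count shows that every perfect matching of $H'(n;d_1,d_2,d_3)$ is of this form: if $a$ and $b$ are the numbers of $UWW$ and $UUW$ edges in such a matching, then $a+b=n$ and $2a+b=d_1+d_2+d_3=n$, so $a=0$ and $b=n$. Moreover, exactly $d_i$ edges of such a matching have their $W$-vertex in $W_i$ and use $d_i$ vertices from each of the other two $U$-classes.

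The construction proceeds by partitioning, for each $i\in[3]$ and $\{j,k\}=[3]\setminus\{i\}$, the sets $U_j$ and $U_k$ as $U_j=U_j^i\cup U_j^k$ and $U_k=U_k^i\cup U_k^j$ with $|U_j^i|=|U_k^i|=d_i$. The three induced sub-$3$-graphs $H_i:=H[U_j^i\cup U_k^i\cup W_i]$ are vertex-disjoint and tripartite with $d_i$ vertices per class, so perfect matchings $M_i$ in each $H_i$ assemble into the desired perfect matching of $H$.

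To choose a good partition, I would use the $\alpha$-good hypothesis: double-counting gives $|E(H'(n;d_1,d_2,d_3))\setminus E(H)|\le\alpha n^3$. Call a triple $(i,u_j,w)$ with $u_j\in U_j$, $w\in W_i$ \emph{bad} if $|\{u_k\in V_k:u_ju_kw\notin E(H)\}|\ge d_i/4$; the bound just stated together with $d_i\ge 5n/16$ gives at most $O(\alpha n^2)$ bad triples for each of the six choices of $(j,k,i)$. I would then fix the partitions in two stages: first deterministically route each $U$-vertex that belongs to many bad triples into its ``safe'' part, then partition the remaining vertices uniformly at random subject to the size constraints. Standard Chernoff-type concentration, applicable because $d_i\ge 5n/16$ keeps all partition probabilities bounded away from $0$ and $1$, shows that with positive probability every pair in every $H_i$ has codegree at least $d_i/2+1$. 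The theorem of Aharoni, Georgakopoulos and Spr\"ussel ($m'_2(3,N)\le N/2+1$), applied to each $H_i$, then delivers the perfect matchings $M_i$.

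The main obstacle is ensuring the codegree condition $\delta_2(H_i)\ge d_i/2+1$ simultaneously for every pair in all three $H_i$. Randomness alone cannot tame a bad pair: if many $u_k\in V_k$ are already missing from the $H$-codegree of $(u_j,w)$, then a random sub-selection of $U_k^i\subset V_k$ still misses too many. This is why the deterministic routing step is essential, and why the quantitative bounds $\alpha<2^{-8}$ and $d_i\ge 5n/16$ are sharpened precisely as stated: the former controls the total number of bad triples while the latter ensures the ``safe'' side of each partition is large enough to absorb all vertices routed to it without overshooting the required sizes.
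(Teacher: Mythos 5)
Your reduction to three disjoint tripartite graphs $H_i$ and the appeal to the Aharoni--Georgakopoulos--Spr\"ussel codegree bound has a genuine gap at its key step: no routing/partitioning scheme can guarantee $\delta_2(H_i)\ge d_i/2+1$, because vertex $\alpha$-goodness gives no control whatsoever on the codegree of an individual pair. A vertex may miss up to $\alpha n^2$ pairs from its $H'(n;d_1,d_2,d_3)$-link, so it can have on the order of $\alpha n$ partners with which its codegree is \emph{zero}. Concretely, let $H$ be $H'(n;d_1,d_2,d_3)$ with all edges containing the pair $\{u_1,w_2\}$ and all edges containing the pair $\{u_1,w_3\}$ deleted, for fixed $u_1\in U_1$, $w_2\in W_2$, $w_3\in W_3$. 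Every vertex misses at most $2n\le\alpha n^2$ pairs from its $H'$-link, so all vertices are $\alpha$-good, yet whichever of $H_2$ (containing $W_2$) or $H_3$ (containing $W_3$) receives $u_1$ contains a legal pair of codegree $0$, and the AGS theorem cannot be applied --- no matter how the remaining vertices are split, deterministically or at random. Your deterministic routing step only relocates vertices lying in ``many'' bad triples, but a single zero-codegree partner on each side already blocks it; the same problem occurs for pairs $(u_j,u_k)$ of $U$-vertices whose codegree into $W_i$ may be far below $d_i/2$. (The lemma itself is of course still true on this example; it is the method that fails. Also, the Chernoff/union-bound step needs $n$ large, whereas the statement only assumes $n\ge 10$.) A repair would have to treat the exceptional, codegree-deficient pairs separately before randomising, which is essentially a different proof.

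For comparison, the paper avoids codegree conditions altogether: it takes a largest matching $M$ consisting of edges of type $UUW$, and if $M$ is not perfect it picks leftover vertices $w_1\in W_1$, $u_2\in U_2$, $u_3\in U_3$, shows (using goodness of $w_1$ and maximality of $M$) that $|M_1|\ge n/4$, and then uses goodness of $w_1,u_2,u_3$ to find two edges $e_1,e_2\in M_1$ that can be replaced by three edges covering $V(e_1)\cup V(e_2)\cup\{w_1,u_2,u_3\}$, contradicting maximality. This rotation argument only ever needs the aggregate, per-vertex count of missing pairs, which is exactly what $\alpha$-goodness provides.
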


\begin{proof}
Let $U_1 \cup W_1$, $U_2 \cup W_2$ and $U_3 \cup W_3$ be the partition classes of $H'(n;d_1,d_2,d_3)$ with $|W_i| = d_i$ for $i \in [3]$.
Consider the largest matching~$M$ in $H$ which consists entirely of edges of type $UUW$, where $U = \bigcup_{i \in [3]} U_i$ and $W = \bigcup_{i \in [3]} W_i$.
For $i \in[3]$, define $M_i$ to be the submatching in $M$ consisting of all edges of type $UUW_i$.
Thus $M = M_1 \cup M_2 \cup M_3$.
Let $U'_i = U_i \setminus V(M)$ and $W'_i = W_i \setminus V(M)$ for $i \in[3]$.
We may assume that $W'_1 \cup W'_2 \cup W'_3 \ne \emptyset$, or else $M$ is a perfect matching in $H$.
Without loss of generality, we may assume that $W'_1 \ne \emptyset$ and so $U'_2 \ne \emptyset$ and $U'_3 \ne \emptyset$.
Next, we are going to show that $|M_1| \ge n/4$.
Let $w_1 \in W'_1$.
Note that $w_1u_2u_3$ is not an edge in $H$ for all $u_i \in U'_i$ and all $i  =2,3$.
Otherwise, $M \cup \{w_1u_2u_3\}$ is a matching contradicting the maximality of~$M$.
Since $w_1$ is $\alpha$-good with respect to $H'(n;d_1,d_2,d_3)$, it follows that 
\begin{align*}
|U'_2||U'_3| \le |E(L_{v}^{H(n;d_1,d_2,d_3)} - L^{H}_v)| \le \alpha n^2
\end{align*}
and so 
$\min \{|U'_2|, |U'_3|\} \le \sqrt{\alpha} n$, say $|U'_2| \le \sqrt{\alpha} n$.
Thus, 
\begin{align*}
|M_1|  
& = |V_2 \cap V(M_1)| 
= |V_2| - |W_2| - |U_2'| - |V_2 \cap V(M_3)| \\
& \ge n - d_2 - \sqrt{\alpha}n - d_3 
= d_1 - \sqrt{\alpha}n \ge n/4
\end{align*}
as claimed.

Pick $w_1 \in W'_1$, $u_2 \in U'_2$ and $u_3 \in U'_3$.
Given a pair of distinct edges $e_1,e_2 \in M_1$, we say that $(e_1,e_2)$ is \emph{good for $w_1u_2u_3$} if $H$ contains all possible edges $e$ such that $V(e) \subseteq V(e_1 \cup e_2) \cup \{w_1,u_2,u_3\}$ and 
\begin{align*}
|V(e) \cap V(e_1)|= |V(e) \cap V(e_2)| = |V(e) \cap \{ w_1,u_2,u_3\}| = 1.
\end{align*}
If $(e_1,e_2)$ is good for $w_1u_2u_3$, then there is a matching $M'$ of size~$3$ in $H$ spanning the vertex set $V(e_1 \cup e_2) \cup \{w_1,u_2,u_3\}$.
Since $V_1 \cap V(M')\subseteq W_1$ and $V(M') \setminus V_1 \subseteq U$, every edge in $M'$ is of type $UUW_1$. 
Therefore, we can obtain a matching, namely $(M - \{e_1,e_2\}) \cup M' $, in $H$ that is larger than $M$ consisting only of edges of type $UUW$, yielding a contradiction.
Hence we may assume that there is no good pair for $w_1u_2u_3$.
Since $|M_1| \ge n/4$, we have at least $\binom{n/4}2 \ge n^2/64$ pairs of distinct edges $e_1,e_2 \in M_1$. Since $w_1$, $u_2$ and $u_3$ are $\alpha$-good with respect to $H'(n;d_1,d_2,d_3)$, there are at most $3 \alpha n^2 < n^2/64$ pairs of distinct edges $e_1,e_2 \in M_1$ such that the pair $(e_1,e_2)$ is not good for $w_1u_2u_3$.
So there exists a pair of edges in $M_1$ that is good for $w_1u_2u_3$, a contradiction.
\end{proof}

Next we prove Lemma~\ref{lma:epsilon-close}.
We are going to consecutively remove 4 matchings $M^1, \dots, M^4$ from~$H$ such that the resulting graph $H^4$ satisfies the hypothesis of Lemma~\ref{lma:alpha-good}.
Therefore there exists a perfect matching $M_5$ in $H^4$ and moreover $M^1 \cup  \dots \cup  M^5$ is a perfect matching in $H$.

\begin{proof}[Proof of Lemma~\ref{lma:epsilon-close}]
Suppose $H$ is as in the statement of the lemma.
Note that $\delta_1(H) > d_3(n,n-1)$.
Let $r$ and $s$ be the unique integers such that $n = rk+s$, $r \ge 0$ and $1 \le s \le 3$.
Hence, $r = \lfloor (n-1)/3 \rfloor$.
Let $U_1 \cup W_1$, $U_2 \cup W_2$, $U_3 \cup W_3$ be the partition classes of $H$ with 
\begin{align}
|W_i| =  \left\lfloor \frac{n+i-1}{3} \right\rfloor = \begin{cases}
	r & \textrm{if $i +s \le 3$,}\\
	r +1 & \textrm{otherwise}
	\end{cases} \nonumber
\end{align}
for $i \in[3]$.
Since $H$ is $\varepsilon$-close to $H'_3(n;n)$, all but at most $3\sqrt{\varepsilon}n$ vertices in $H$ are $\sqrt{\varepsilon}$-good with respect to $H'_3(n;n)$.
If all vertices are $\sqrt{\varepsilon}$-good with respect to $H'_3(n;n)$, then Lemma~\ref{lma:alpha-good} implies that $H$ contains a perfect matching.
Hence, we may assume that there exists one $\sqrt{\varepsilon}$-bad vertex.
Given $i \in [3]$, let $U_i^{\bad}$ be the set of $\sqrt{\varepsilon}$-bad vertices with respect to $H'_3(n;n)$ in $U_i$.
Define $W_i^{\bad}$ similarly.
So $|U_i^{\bad}|, |W_i^{\bad}| \le 3 \sqrt{\varepsilon} n$ for all $i \in [3]$.
Let $c =\max |W_i^{\bad}|$, so $1 \le c \le 3 \sqrt{\varepsilon}n$.
For $i \in [3]$, pick a vertex set $\widetilde{W}_i$ such that $W_i^{\bad} \subseteq \widetilde{W}_i \subseteq W_i$ and 
\begin{align}
|\widetilde{W}_i| = \begin{cases}
	c & \textrm{if  $i +s \le 3$,}\\
	c+1& \textrm{otherwise}.
	\end{cases} \nonumber
\end{align}
Define $U_i^0 = U_i \cup \widetilde{W}_i$ and $W^0_i = W_i \setminus\widetilde{W}_i$.
Note that $|U_i^0| = n - r +c$ and $|W_i^0| = r-c$ for all $i \in [3]$.

We are going to successively remove matchings $M^1, \dots, M^5$ from~$H$.
For convenience, we will use the following notation.
Let $H_0 = H$ and $n_0 = n$.
For $j \in[5]$, let $U^{j}_i = U^{j-1}_i \setminus V(M^j)$, $U^{j} = \bigcup_{i \in [3]}U^{j}_i$, $W^{j}_i  = W^{j-1}_i\setminus V(M^j)$, $W^{j} =  \bigcup_{i \in [3]}W^{j}_i$ and $H^j =  H[W^j \cup U^j] = H^{j-1} \setminus V(M^j)$.
Write $m_j = |M_j|$ and $n_j = |U^{j}_i|+ |W^{j}_i| = n_{j-1}-m_j$.

Recall that $|U_i^0| = n - r +c$.
For each $v \in U_i^0$, there are at most $n^2 - (n-r+c)^2$ edges containing $v$ and one vertex in $W^0$.
Hence, 
\begin{align}
	\delta_1(H[U^0])& >  d_3(n,n-1) - (n^2 - (n-r+c)^2) = d_3(n-r+c,3c+s-1). \nonumber
\end{align}
The last equality holds by the explicit definition of $d_3(n,m)$ given in Section~\ref{sec:introduction}.
Note that $ n-r+c \ge 2n/3 >  3^7(3c+s)$.
Thus, there exists a matching $M^1$ of size $m_1 = 3c+s$ in $H[U^0]$ by Corollary~\ref{cor:partialmatching}.

Let $H' = H'_3(n_1;n_1)$ on the vertex set $V(H^1)$.
Note that if $u \in U^1$ is $\sqrt{\varepsilon}$-good with respect to $H'_3(n;n)$ in $H$, then $u$ is $2\sqrt{\varepsilon}$-good with respect to $H'$ in $H^1$ or else
\begin{align}
	\sqrt{\varepsilon} n^2 \ge |E(L_{u}^{H_3'(n;n)} - L^{H}_u)| \ge  |E(L_{u}^{H'} - L^{H^1}_u)| > 2\sqrt{\varepsilon} n_1^2\ge \sqrt{\varepsilon} n^2, \nonumber
\end{align}
a contradiction.
If $w \in W_1 \cap V(H^1)$ is $\sqrt{\varepsilon}$-good with respect to $H'_3(n;n)$ in $H$, then 
\begin{align}
	 |E(L_{w}^{H'} - L^{H^1}_w)| & \le  |E(L_{w}^{H'_3(n;n)} - L^{H}_w)| + |\widetilde{W}_2||W^1_3|+|\widetilde{W}_3||W^1_2| \nonumber	\\
	& \le \sqrt{\varepsilon}n^2  +  c n^2/3  + c n^2/3 < 5 \sqrt{\varepsilon} n_1^2. \nonumber
\end{align}
Therefore, if a vertex $v \in V(H^1)$ is $5 \sqrt{\varepsilon}$-bad with respect to~$H'$, then $v$ is $\sqrt{\eps}$-bad with respect to $H'_3(n,n)$ in $H$.
Define $U^{1,\bad}$ to be the set of such vertices.
So $U^{1,\bad} \subseteq \bigcup_{i \in [3]} (U_i^{\bad} \cup {W}_i^{\bad}) \subseteq \bigcup_{i \in [3]} (U_i^{\bad} \cup \widetilde{W}_i)$ and $|U^{1,\bad}| \le 3 \sqrt{\varepsilon}n$.
If $U^{1,\bad} = \emptyset$, then there exists a matching $M'_2$ of size $n_1 = n -m_1$ in $H^1$ by Lemma~\ref{lma:alpha-good}.
Thus, $M_1 \cup M'_2$ is a perfect matching in $H$.
So we may assume that $U^{1,\bad} \ne \emptyset$.
A vertex $u \in U^{1,\bad}$ is \emph{useful} if there exist greater than $6\sqrt{\varepsilon} n^2$ pairs of vertices $(u',w) \in U^1 \times W^1$ such that $uu'w$ is an edge in $H^1 \subseteq H$.
Clearly we can greedily select a matching $M^2$ in $H^1$ of size $m_2 \le |U^{1,\bad}| \le 3\sqrt{\varepsilon} n$, where $M^2$ covers all useful vertices and consists entirely of edges of type $U^1U^1W^1$.

Pick $u \in U^{1,\bad} \cap U^2$, say $u \in V_1$.
Note that $u$ is not useful and so the number of edges in $H^2[U^2]$ containing $u$ is 
\begin{align*}
\deg_{H^2[U^2] }(u) & \ge  \deg_{H^2}(u) - 6 \sqrt{\eps} n^2  - |W^2_2||W^3_3| \\
& \ge \delta_1(H) - 2 n (m_1 + m_2) -  6 \sqrt{\eps} n^2  - \lceil n/3 \rceil ^2
 \ge 6 \sqrt{\varepsilon} n^2.
\end{align*}
Since $|U^{1,\bad}| \le 3 \sqrt{\varepsilon}n$, once again, we can greedily select a matching $M^3$ in $H^{2}[U^2]$ of size $|U^{1,\bad} \cap U^2|$ such that $M^3$ covers all $U^{1,\bad} \cap U^2$.

Note that $H^3$ is a $3$-partite $3$-graph with each partition of size $n_3 = n - m_1-m_2-m_3 \ge (1- 20\sqrt{\varepsilon})n$.
Let $d_i = |W^3_i|$ for $i \in[3]$.
Our aim is to show that every vertex in $H^3$ is $6\sqrt{\varepsilon}$-good with respect to $H'(n_3;d_1,d_2,d_3)$.
Note that $V(H^3) \subseteq V(H^1) \setminus U^{1,\bad}$ and $H'(n_3;d_1,d_2,d_3) \subseteq H'$.
So for every vertex $v \in V(H^3)$, we have 
\begin{align*}
	 |E(L_{v}^{H^3} - L^{H'(n_3;d_1,d_2,d_3)}_v)| & \le  |E(L_{v}^{H^1} - L^{H'}_v)| \le 5 \sqrt{\eps} n_1^2 \le 6 \sqrt{\eps} n_1^3.
\end{align*}
Therefore, every vertex in $H^3$ is $6 \sqrt{\varepsilon}$-good with respect to $H'(n_3;d_1,d_2,d_3)$ as claimed.
Recall that $H'(n_3;d_1,d_2,d_3)$ contains all edges of type $U^4U^4W^4$ and $U^4W^4W^4$.
Therefore, we can greedily find a matching $M^4$ of size $m_4 = m_1+m_3 \le 13 \sqrt{\varepsilon}n$ edges consisting of edges of type $U^4W^4W^4$.

Finally, notice that all vertices of $H^4$ are $8\sqrt{\varepsilon}$-good with respect to $H'(n_4;d_1',d_2',d_3')$, where $d_i' = |W^4_i|$ for all $i \in[3]$.
Note that $d_i' \ge 5n_4/16$ for each $i \in [3]$ and 
\begin{align}
d_1'+d_2'+d_3' = n - m_2 - 2 m_4 = n - \sum_{i \in [4]} m_i = n_4. \nonumber
\end{align}
Thus, we can apply Lemma~\ref{lma:alpha-good} to obtain a perfect matching $M^5$ in $H^4$, so $M^1 \cup \dots \cup M^5$ is a perfect matching in~$H$ as required.
\end{proof}

Finally, we are ready to prove Theorem~\ref{thm:exact}. 

\begin{proof}[Proof of Theorem~\ref{thm:exact}]
Let $\gamma>0$ be sufficiently small and let $n_0$ be a sufficiently large integer.
Set $\gamma' = \gamma^5/20$ and $\gamma'' = 3(1200 \gamma + \sqrt{\gamma/2})$.
Since $\gamma$ is small, $H$ satisfies the hypothesis of Lemma~\ref{lma:absorptionlemma} with $k = 3$ and $l=1$.
Let $M$ be the matching given by Lemma~\ref{lma:absorptionlemma} and so $|M| \le 2 \gamma^3 n$.
Let $H' = H \setminus V(M)$.
Note that 
\begin{align}
	\delta_1(H') \ge \delta_1(H)-  2 |M| n \ge \left( 5/9- \gamma   \right){n'}^2,
\nonumber
\end{align}
where $n' = n - |M|$.
First suppose that there exists a matching $M'$ in $H'$ covering all but at most $\gamma' n'$ vertices in each partition class.
Let $W = V(H) \setminus V(M \cup M')$.
Clearly, $W$ is balanced and has size at most $3 \gamma' n$.
By the property of $M$, there exists a matching $M''$ covering exactly the vertices of $V(M) \cup W$.
Thus, $M'$ and $M''$ form a perfect matching.

Therefore, we may assume that there is no matching in $H'$ of size $(1-\gamma')n'$.
By Lemma~\ref{lma:exact} taking $\rho = \gamma'$, $H = H'$ and $n = n'$, there exists a subgraph $H''$ in $H'$ such that $H''$ is $8 \gamma''$-close to $H'_3(n'';n'')$ such that  $n'' \ge  (1- \gamma'')n' \ge(1-2 \gamma'') n $ and $ 3 | n''$.
Note that
\begin{align*}
	|E(H'_3(n;n)-H)| & \le |E(H'_3(n;n) -H'')| \nonumber \\
	& \le  8 \gamma'' n''^3 + 3(n-n'')n^2 \le 14 \gamma'' n^3,
\end{align*}
so $H$ is $14 \gamma''$-close to $H'_3(n;n)$.
Therefore $H$ has a perfect matching by Lemma~\ref{lma:epsilon-close}.
\end{proof}

\section{Acknowledgment}

The authors would like to thank Victor Falgas for his helpful comments and to the referees for their careful reading, one of whom in particular read the paper extremely carefully and suggested helpful clarifications.

\end{document}